\title{Topological stability of continuous functions with respect to averaging by measures with locally constant densities}
\author{Sergiy Maksymenko, Oksana Marunkevych}
\address{Institute of Mathematics of NAS of Ukraine, str.\! Tereshchenkivs'ka, 3, 01601, Kyiv, Ukraine}
\email{maks@imath.kiev.ua, oxanamarunkevysh@rambler.ru}
\keywords{averaging, topological equivalence}
\subjclass[2000]{26A99, 60G35}
\newcommand\testshape{family=\f@family; series=\f@series; shape=\f@shape.}
\def\myemphInternal#1{\if n\f@shape%
\begingroup\itshape #1\endgroup\/%
\else\begingroup\bfseries #1\endgroup%
\fi}
\def\myemph{\futurelet\testchar\MaybeOptArgmyemph}
\def\MaybeOptArgmyemph{\ifx[\testchar \let\next\OptArgmyemph
                 \else \let\next\NoOptArgmyemph \fi \next}
\def\OptArgmyemph[#1]#2{\index{#1}\myemphInternal{#2}}
\def\NoOptArgmyemph#1{\myemphInternal{#1}}
\newcommand\RRR{\mathbb{R}}
\newcommand\eps{\varepsilon}
\newtheorem{definition}[subsection]{Definition}
\newtheorem{theorem}[subsection]{Theorem}
\newtheorem{lemma}[subsection]{Lemma}
\newtheorem{remark}[subsection]{Remark}
\newtheorem{example}[subsection]{Example}
\newcommand\Homeo{\mathcal{H}}
\renewcommand\subparagraph[1]{}
\begin{document} 


\begin{abstract}
We present sufficient conditions for topological stability of averagings of piece-wise differentiable functions $f:\RRR\to\RRR$ having finitely many local extremes with respect to measures with locally constant densities.
\end{abstract}

\maketitle 

\section{Introduction}

Let $\mu$ be a probability measure on $[-1,1]$, that is a non-negative $\sigma$-additive measure defined on the Borel algebra of subsets of $[-1,1]$ and such that $\mu[-1,1]=1$.
Then for each continuous function $f\colon (a,b)\to\RRR$ and a number $\alpha>0$ satisfying $2\alpha < b-a$ one can define a new measurable function $f_{\alpha}\colon (a+\alpha, b-\alpha)\to\RRR$ defined by:
\begin{equation}\label{equ:averaging}
f_{\alpha}(x) = \int_{-1}^{1} f(x+t\alpha)  d\mu.
\end{equation}
We will call it an \textit{$\alpha$-averaging} of $f$ with respect to the measure $\mu$.
If $\mu$ has a density function $p$, then $f_{\alpha}$ is a \myemph{convolution} of $f$ with $p$, see Remark~\ref{rem:convolution} below.

Averaging of functions plays an important role in the problem of signals processing and are called \myemph{linear filters}, \cite{Huang:Linfilters1:1981}, \cite{Kenneth:IEEE:1995}, \cite{Milanfar:IEEE:2013}.


The present paper continues the authors work~\cite{MaksymenkoMarunkevych:UMZ:ENG:2015} on topological stability of averagings of continuous functions, see Definitions~\ref{defn:top_stab}, \ref{def:top_stab_rel_averagings}, and~\ref{def:loc_top_stab} below.

\begin{definition}\label{defn:top_stab}
{\rm(e.g.~\cite{Arnold:UMN:ENG:1992}, \cite{Thom:Top:1965}).}
Two continuous functions $f\colon (a,b) \to \RRR$ and $g\colon (c,d) \to \RRR$ are \myemph{topologically equivalent} if there exist orientation preserving homeomorphisms $h\colon  (a,b)\to(c,d)$ and $\phi\colon\RRR\to\RRR$ such that $\phi\circ f = g \circ h$, that is the following diagram is commutative.
\[
\begin{CD}
(a,b) @>{f}>{}>\RRR \\
@V{h}VV @VV{\phi}V \\
(c,d) @>{g}>> \RRR
\end{CD}
\]
\end{definition}
Roughly speaking this means that the graphs of $f_{\alpha}$ and $f$ <<have the same form>>.



\begin{definition}\label{def:top_stab_rel_averagings}
{\rm\cite{MaksymenkoMarunkevych:UMZ:ENG:2015}.}
Let $f\colon \RRR\to\RRR$ be a continuous function and $\mu$ be a probability measure on $[-1,1]$.
Say that $f$ is \myemph{topologically stable} with respect to averaging by measure $\mu$ if there exists $\varepsilon>0$ such that for all $\alpha\in(0,\varepsilon)$ the functions $f$ and $f_{\alpha}$ are topologically equivalent.
\end{definition}


The problem topological stability of averagings has applications to computation of entropy of digital signals, \cite{BandtPompe:PRL:2002}, \cite{AntonioukKellerMaksymenko:DCDSA:2014}, \cite{KellerMaksymenkoStolz:DCDSB:2015}.


Let $C^{0}(\RRR)$ be the space of all continuous functions $\RRR\to\RRR$ and $\Homeo^{+}(\RRR)$ be the group of all orientation preserving homeomorphisms of $\RRR$.
Evidently, $\Homeo^{+}(\RRR)$ consists of all strictly increasing surjective continuous functions $h\colon \RRR\to\RRR$.
Then the product $\Homeo^{+}(\RRR) \times \Homeo^{+}(\RRR)$ acts on the space $C^{0}(\RRR)$ by the following rule: if $(h,\phi)\in \Homeo^{+}(\RRR) \times \Homeo^{+}(\RRR)$ and $f\in C^{0}(\RRR)$, then the result of the action of $(h,\phi)$ on $f$ is the function
\[
\phi \circ f \circ h^{-1}\colon \RRR\to\RRR.
\]
This action is one of the main objects of study in singularities theory, see \cite{GolubitskiGuillemin:StableMats:ENG:1977}, \cite{ArnoldVarchenkoGuseinZade:1}.

Notice that $f,g\in C^{0}(\RRR)$ are topologically equivalent if and only if they belong to the same orbit with respect to the above action of $\Homeo^{+}(\RRR) \times \Homeo^{+}(\RRR)$.

For $f\in C^{0}(\RRR)$ define the following path started at $f$:
\[ 
\gamma_{f}\colon  [0,\infty) \to C^{0}(\RRR),
\qquad 
\gamma_{f}(\alpha) = f_{\alpha}.
\]
Evidently, $f\in C^{0}(\RRR)$ is topologically stable with respect to averaging by measure $\mu$ if and only if a ``small initial part'' of this path, i.e.\! $\gamma_{f}[0,\eps]$ for some $\eps>0$, is contained in the orbit of $f$.

Thus the averaging is a linear operation on $C^{0}(\RRR)$ of all continuous functions $\RRR\to\RRR$, while topological equivalence arises from non-linear actions of the group $\Homeo(\RRR)\times\Homeo(\RRR)$.

\medskip

In~\cite{MaksymenkoMarunkevych:UMZ:ENG:2015} the authors obtained sufficient conditions for topological stability of continuous functions $f:\RRR\to\RRR$ having finitely many local extremes with respect to averagings.
It is shown that this global problem reduces to a stability of germs of $f$ near these local extremes.
In the present paper we will prove that those sufficient conditions are also necessary, see Definition~\ref{def:loc_top_stab} and Theorem~\ref{th:main_theorem_global_2} below.
Thus the problem of global stability is equivalent to a local problem of stability of germs.

\medskip

Let $f\colon (\RRR,a)\to\RRR$ be a germ of continuous function at some $a\in\RRR$, that is $f$ is a continuous function defined on a small interval $(a-\varepsilon, a+\varepsilon)$ for some $\varepsilon$.
If $\alpha<\varepsilon$, then $f_{\alpha}$ is defined on $(a-\varepsilon+\alpha, a+\varepsilon-\alpha)$, and so its germ at $a$, clearly, depends on only of the germ of $f$ at $a$.

Notice that in general the germs $f$ and $f_{\alpha}$ at $a$ are not topologically equivalent: local extremes can move under averaging.
This leads to the following definition:

\begin{definition}\label{def:loc_top_stab}
Say that a germ $f\colon (\RRR,a)\to\RRR$ is \myemph{topologically stable} with respect to averaging by measure $\mu$, if there exists $\eps>0$ such that for each $\alpha\in(0,\varepsilon)$ there exist numbers $c_1,c_2, d_1,d_2\in(a-\eps, a+\eps)$ satisfying $c_1 < a < c_2$, $d_1 < d_2$ and such that the restrictions
\begin{align*}
f|_{(c_1,c_2)}\colon  & \ (c_1,c_2) \ \to \ \RRR, &
f_{\alpha}|_{(d_1,d_2)}\colon  & \ (d_1,d_2) \ \to \ \RRR
\end{align*}
are topologically equivalent.
\end{definition}

\begin{theorem}\label{th:main_theorem_global_2}{\rm c.f.~\cite{MaksymenkoMarunkevych:UMZ:ENG:2015}}
Let $\mu$ be a probability measure on $[-1,1]$ and $f\colon \RRR \to \RRR$ be a continuous function having only finitely many local extremes $x_1,\ldots,x_n$.
Suppose that the values $f(x_i)$, $(i=1,\ldots,n)$, are mutually distinct and differ from $\lim\limits_{x\to-\infty} f(x)$ and $\lim\limits_{x\to+\infty} f(x)$.
Then the following conditions are equivalent:
\begin{enumerate}
\item[\rm(a)]
$f$ is topologically stable with respect to averagings by measure $\mu$;
\item[\rm(b)]
for each $i=1,\ldots,n$ the germ $f\colon (\RRR,x_i) \to \RRR$ at $x_i$ is topologically stable with respect to averaging by measure $\mu$.
\end{enumerate}
\end{theorem}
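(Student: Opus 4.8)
The implication (a)$\Rightarrow$(b) is essentially immediate: if $f$ and $f_\alpha$ are globally topologically equivalent via $(h,\phi)$, then $h$ must carry the local extremes of $f$ to those of $f_\alpha$ in an order-preserving way, and restricting the conjugacy to a small neighbourhood of $x_i$ (carved out by preimages under $h$ of suitable points) exhibits local topological equivalence of the germ of $f$ at $x_i$ with a restriction of $f_\alpha$. The point needing care is that a local extreme of $f_\alpha$ near $x_i$ need not be exactly $h(x_i)$; but since the values $f(x_i)$ are mutually distinct and differ from the limits at $\pm\infty$, for small $\alpha$ the graph of $f_\alpha$ still has exactly one extreme in the relevant interval, and that is the one we use. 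So the plan is: fix $\eps$ from global stability, and for each $\alpha\in(0,\eps)$ choose points $a-\delta<x_i<a+\delta$, set $d_j:=h(\cdot)$ of these, and observe that $(h,\phi)$ restricts to the required local equivalence.

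The substantive direction is (b)$\Rightarrow$(a), and here the plan is to invoke the reduction already established in \cite{MaksymenkoMarunkevych:UMZ:ENG:2015}: that paper shows the germ-stability conditions in (b) are \emph{sufficient} for global stability under exactly the present hypotheses (distinct critical values, distinct from the limits at infinity, finitely many extremes). Concretely, I would recall the structure of that argument: since $f$ has finitely many local extremes with distinct values separated from $\lim_{x\to\pm\infty}f(x)$, one can cover $\RRR$ by finitely many overlapping intervals, each containing at most one extreme, on the complements of which $f$ is strictly monotone; averaging with small $\alpha$ preserves monotonicity on slightly shrunk monotone pieces (this is where one uses that $\mu$ is a probability measure, so $f_\alpha$ lies in the convex hull of nearby values of $f$, hence inherits monotonicity on any interval where $f$ is monotone on an $\alpha$-neighbourhood), and on the pieces containing an extreme one applies hypothesis (b) to get a local topological model for $f_\alpha$. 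These local homeomorphisms are then glued along the monotone overlaps into a global pair $(h,\phi)\in\Homeo^+(\RRR)\times\Homeo^+(\RRR)$ with $\phi\circ f=f_\alpha\circ h$.

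The main obstacle is the gluing step: the local conjugacies coming from (b) are defined only on small, $\alpha$-dependent intervals $(d_1,d_2)$ around the (possibly shifted) extremes of $f_\alpha$, and to splice them with the monotone-piece homeomorphisms one must arrange that they agree on overlaps and that the resulting $h,\phi$ are genuinely strictly increasing surjections of $\RRR$. The key quantitative input is that, because the critical values of $f$ are distinct and bounded away from the limits, there is a uniform (in small $\alpha$) separation of the critical values of $f_\alpha$ and of the ``levels'' at which the gluing takes place, so the monotone transition regions never degenerate; one then builds $\phi$ first on $\RRR$ (matching up the finitely many critical values of $f$ with those of $f_\alpha$ monotonically), and $h$ afterwards by piecing together monotone bijections between the corresponding level-preimages. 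I would carry this out by: (i) listing the extremes and the induced decomposition of $\RRR$ and of the range; (ii) proving the monotonicity-preservation lemma for $f_\alpha$ on shrunk monotone arcs; (iii) for each extreme producing, from (b), a topological model of $f_\alpha$ near it together with the locations of its extreme and the endpoint values; (iv) defining $\phi$ globally and then $h$ piecewise, checking continuity, strict monotonicity, and the conjugacy identity on each piece; (v) concluding that $f$ and $f_\alpha$ are topologically equivalent for every $\alpha$ in a common interval $(0,\eps)$, which is precisely (a).
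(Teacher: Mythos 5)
Your overall plan matches the paper's, but the division of labour is inverted relative to what the paper actually writes down. The paper proves only (a)$\Rightarrow$(b) and simply cites \cite{MaksymenkoMarunkevych:UMZ:ENG:2015} for (b)$\Rightarrow$(a); your lengthy reconstruction of the decompose-and-glue argument for (b)$\Rightarrow$(a) is a plausible outline of that cited proof, but it remains a plan (the gluing in your step (iv) is described, not executed), so as written it neither replaces nor improves on the citation. For the direction the paper does prove, (a)$\Rightarrow$(b), your idea --- restrict the global conjugacy $(h_{\alpha},\phi_{\alpha})$ to a small neighbourhood of $x_i$ --- is exactly the paper's, but you gloss the one quantitative point and attribute it to the wrong hypothesis. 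Under the conjugacy the local extreme of $f_{\alpha}$ corresponding to $x_i$ \emph{is} exactly $h_{\alpha}(x_i)$, so your worry that it ``need not be exactly $h(x_i)$'' is misplaced; what genuinely needs proof is that $h_{\alpha}(x_i)$ lies in $[x_i-\alpha,x_i+\alpha]$, since Definition~\ref{def:loc_top_stab} requires the interval $(d_1,d_2)$ carrying the restriction of $f_{\alpha}$ to sit inside $(x_i-\eps,x_i+\eps)$, and a homeomorphism of $\RRR$ could a priori move $x_i$ far away. The paper obtains this not from the distinctness of the critical values (which is what you invoke at this point) but from the monotonicity-preservation lemma that you yourself list under (b)$\Rightarrow$(a): after shrinking $\eps$ so that $x_{i+1}-x_i>4\eps$, the averaging $f_{\alpha}$ is strictly monotone on $(-\infty,x_1-\alpha)$, $(x_1+\alpha,x_2-\alpha)$, \dots, so every extreme of $f_{\alpha}$ is trapped in one of the intervals $[x_i-\alpha,x_i+\alpha]$ and $h_{\alpha}(x_i)$ is the unique extreme in $(x_i-2\alpha,x_i+2\alpha)$; one then takes $(c_1,c_2)=(x_i-\alpha,x_i+\alpha)\cap h_{\alpha}^{-1}(x_i-2\alpha,x_i+2\alpha)$ and $(d_1,d_2)=h_{\alpha}(c_1,c_2)$. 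With that localization step supplied, your (a)$\Rightarrow$(b) argument coincides with the paper's and is complete.
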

The implication (b)$\Rightarrow$(a) is established in~\cite{MaksymenkoMarunkevych:UMZ:ENG:2015}.
We will prove that (a)$\Rightarrow$(b).
Thus for functions ``of general position'' the problem completely reduces to study local stability of germs at local extremes.


In~\cite{MaksymenkoMarunkevych:UMZ:ENG:2015} the authors also obtained sufficient conditions for topological stability of germs with respect to averagings by discrete measures with finite supports.
In the present paper we give sufficient conditions for topological stability of germs with respect to measures with piece wise continuous  (and in particular with piece wise constant) densities, see Theorems~\ref{th:perturb_averaging_stab} and~\ref{th:LR_averaging_stab}.


\begin{remark}\label{rem:convolution}\rm
Suppose that $\mu$ has a density $p\colon [-1,1]\to\RRR$, that is a measurable function such that $\mu(A) = \int_{A} p(t) dt$ for each Borel subset $A\subset[-1,1]$.
For $\alpha>0$ define the function $p_{\alpha}\colon [-\alpha,\alpha]\to\RRR$ and measure $\mu_{\alpha}$ on $[-\alpha,\alpha]$ by the formulas:
\begin{align*}
p_{\alpha}(s) &= \dfrac{p(s/\alpha)}{\alpha}, &
\mu(A) &= \int_{A} p_{\alpha}(s) ds.
\end{align*}
Then 
\begin{align*}
\mu_{\alpha}[-1,1] &= \int_{-\alpha}^{\alpha} p_{\alpha}(s) ds =
\int_{-\alpha}^{\alpha} \dfrac{p(s/\alpha)}{\alpha} ds  \\
&= \int_{-\alpha}^{\alpha} p(s/\alpha) d(s/\alpha) =
\int_{-1}^{1} p(t) dt =1,
\end{align*}
and so $\mu_{\alpha}$ is also a probability measure.
Moreover,
\begin{align*}
f_{\alpha}(x) &= \int\limits_{-1}^{1} f(x+t\alpha) p(t) dt = 
\int\limits_{-\alpha}^{\alpha} f(x+s) p(s/\alpha) d(s/\alpha) \\ 
&= \int\limits_{-\alpha}^{\alpha} f(x+s) p_{\alpha}(s) ds.
\end{align*}
The last integral is called a \myemph{convolution} of $f$ and $p_{\alpha}$ and denoted by $f* p_{\alpha}$.

Usually in the formula for convolution one uses $f(x-s)$ instead of $f(x+s)$.
But this is not essential and plays a role only for certain useful algebraic properties of convolution.
For our purposes it will be convenient to use sign <<+>>.
\end{remark}

\section{Proof of Theorem~\ref{th:main_theorem_global_2}}
The implication (b)$\Rightarrow$(a) is established in~\cite{MaksymenkoMarunkevych:UMZ:ENG:2015}. 
Let us show that (a)$\Rightarrow$(b).

Suppose that $f$ is topologically stable with respect to averagings by $\mu$.
This means that there exists $\eps>0$ such that for each $\alpha\in(0,\eps)$ there are two homeomorphisms $h_{\alpha},\phi_{\alpha} \in \Homeo^{+}(\RRR)$ satisfying $\phi_{\alpha}\circ f_{\alpha}  = f \circ h_{\alpha}$.
In particular, $f_{\alpha}$ has $n$ local extremes $h_{\alpha}(x_i)$, $i=1,\ldots,n$, and takes at them values $f_{\alpha}(h_{\alpha}(x_i)) =\phi_{\alpha}(f(x_i))$.
We should prove that the germ of $f$ at $x_i$ is topologically stable with respect to averaging by measure $\mu$.

Decreasing $\eps$ one may assume that
\begin{equation}\label{equ:dist_xi1_xi}
x_{i+1} - x_{i} > 4\eps
\end{equation}
for all $i=1,\ldots,n-1$.
Let $\alpha \in(0,\eps)$.
Since $f$ is strictly monotone on the intervals 
\[
(-\infty, x_1), 
\ (x_1,x_2),
\ \cdots, \
(x_n, +\infty),
\]
one easily checks, see~\cite[Lemma~2.1]{MaksymenkoMarunkevych:UMZ:ENG:2015}, that $f_{\alpha}$ is strictly monotone on
\[
(-\infty, x_1-\alpha), 
\ (x_1+\alpha,x_2-\alpha),
\ \cdots, \
(x_n+\alpha, +\infty).
\]
Therefore, $h_{\alpha}(x_i) \in [x_i-\alpha,x_i+\alpha]$.
Moreover, it follows from~\eqref{equ:dist_xi1_xi} that $h_{\alpha}(x_i)$ is a unique local extreme of $f_{\alpha}$ on the interval  $(x_i-2\alpha,x_i+2\alpha)$.
Put
\[
(c_1,c_2) \ = \ (x_i-\alpha,x_i+\alpha) \ \cap \ h_{\alpha}^{-1}(x_i-2\alpha,x_i+2\alpha).
\]
\[
(d_1,d_2) \ = \ h_{\alpha}(c_1,c_2).
\]
Then the restrictions $f|_{(c_1,c_2)}$ and $f_{\alpha}|_{(d_1,d_2)}$ are topologically equivalent, that is $\phi_{\alpha}\circ f_{\alpha}  = f \circ h_{\alpha}$.

\section{Piece wise differentiable functions}
In this section we will give sufficient conditions for topological stability of local extremes with respect to averaging by measures having locally continuous densities, see Theorem~\ref{th:perturb_averaging_stab}.

\begin{definition}\label{def:piece_wise_diff_func}
A function $f\colon [a,b]\to\RRR$ is called \myemph{piece wise continuous}, or \myemph{piece wise $0$-differentiable}, if $f$ is continuous everywhere on $[a,b]$ except for a finitely many points $t_1, \ldots, t_n \in (a,b)$, and at each $t_i$ there exist finite left and right limits $\lim\limits_{t\to t_i-0} f(t)$ and $\lim\limits_{t\to t_i+0} f(t)$.
In this case we will also write that $f\in C^0([a,b],t_1,\ldots,t_n)$.

Say that a continuous function $f\colon [a,b]\to\RRR$ is \myemph{piece wise $k$-differentiable}, $k\geq1$, if there exist finitely many points $t_1,\cdots,t_n\in(a,b)$ such that $f$ has continuous derivatives of all orders $\leq k$ on $[a,b]\setminus\{t_1,\ldots,t_n\}$ and for each $i=1,\ldots,n$ and $s=1,\ldots,k$ there exist finite left and right limits
\begin{align*}
f_l(t) &= \lim\limits_{t\to t_i-0} f^{(s)}(t), &
f_r(t) &= \lim\limits_{t\to t_i+0} f^{(s)}(t).
\end{align*}
In this case we will also write $f \in C^{k}([a,b],t_1,\ldots,t_n)$.
\end{definition}
Evidently, the sum and the product of piece wise continuous ($k$-differentiable) functions is piece wise continuous ($k$-differentiable) as well.
Moreover, for $k\geq1$ the derivative of a piece wise $(k+1)$-differentiable function can be defined (in arbitrary way) at discontinuity points to give a piece wise $k$-differentiable function.

The following lemma is well known for continuously differentiable functions:
\begin{lemma}\label{lm:f_is_convex}
Let $f\colon [a,b]\to\RRR$ be a continuous function.
Suppose that one of the following conditions holds true:
\medskip
\begin{enumerate}
\item $f\in C^1([a,b], t_1,\ldots,t_n)$ and $f'(x)<f'(y)$ for all $x<y \in [a,b] \setminus \{t_1,\ldots,t_n\}$;
\medskip
\item $f\in C^2([a,b], t_1,\ldots,t_n)$, $f''(x)>0$ for all $x \in [a,b] \setminus \{t_1,\ldots,t_n\}$, and $\lim\limits_{t\to t_i-0} f'(t) \leq \lim\limits_{t\to t_i+0} f'(t)$ for all $i=1,\ldots,n$.
\end{enumerate}
\medskip
Then $f$ is strictly convex.
\end{lemma}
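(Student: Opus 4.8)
The plan is to reduce both statements to the classical fact quoted just before the lemma (a $C^1$ function on a closed interval with strictly increasing derivative is strictly convex) and then to absorb the finitely many exceptional points $t_1<\dots<t_n$ by a gluing argument. It is convenient to replace strict convexity of $f$ on $[a,b]$ by the equivalent \emph{increasing chords} condition
\[
\frac{f(z)-f(x)}{z-x}<\frac{f(y)-f(z)}{y-z}\qquad\text{for all }a\le x<z<y\le b,
\]
obtained from the definition of strict convexity by putting $z=\lambda x+(1-\lambda)y$ with $\lambda=\tfrac{y-z}{y-x}$; so it suffices to verify this inequality.

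For statement (1), set $t_0=a$, $t_{n+1}=b$. On each $[t_j,t_{j+1}]$ the function $f$ is continuous, is $C^1$ on the interior, and $f'$ extends continuously to the closed interval (by the assumed one-sided limits) and is strictly increasing there; hence $f|_{[t_j,t_{j+1}]}$ is strictly convex by the classical result. Letting $x\to t_i-0$ and $y\to t_i+0$ in $f'(x)<f'(y)$ gives $f'(t_i-0)\le f'(t_i+0)$, i.e. the derivative never jumps down (and for a convex function these one-sided limits coincide with the one-sided derivatives). I would then glue the pieces together by induction on $n$, the crucial one-junction case being: \emph{if $g$ is continuous on $[p,q]$, strictly convex on $[p,c]$ and on $[c,q]$, and $g'(c-0)\le g'(c+0)$, then $g$ is strictly convex on $[p,q]$}. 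Given $x<z<y$ in $[p,q]$, the only nontrivial situation is $x<c<y$ (otherwise $x,z,y$ all lie in $[p,c]$ or all in $[c,q]$); for $z<c$ one combines $\mathrm{slope}(x,z)<\mathrm{slope}(z,c)\le g'(c-0)\le g'(c+0)<\mathrm{slope}(c,y)$ with the identity writing $\mathrm{slope}(z,y)$ as a convex combination of $\mathrm{slope}(z,c)$ and $\mathrm{slope}(c,y)$ with positive weights, so that $\mathrm{slope}(z,y)>\mathrm{slope}(z,c)>\mathrm{slope}(x,z)$; the case $z>c$ is the mirror image and $z=c$ is immediate. Applying this at $t_1,\dots,t_n$ in turn glues the pieces into a strictly convex function on $[a,b]$.

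Statement (2) follows from (1) once one checks that $f'$, which is defined and continuous on $[a,b]\setminus\{t_1,\dots,t_n\}$, is strictly increasing there. Within a single interval $(t_j,t_{j+1})$ this is immediate from $f''>0$; for $x<y$ lying in different intervals, chaining one-sided limits gives $f'(x)<f'(t_i-0)\le f'(t_i+0)\le f'(t_{i+1}-0)\le\dots<f'(y)$, where each step $f'(t_k+0)\le f'(t_{k+1}-0)$ uses $f''>0$ on $(t_k,t_{k+1})$ and each step $f'(t_k-0)\le f'(t_k+0)$ is the hypothesis of (2). I expect the gluing step to be the main obstacle: one has to make sure that convexity on two abutting subintervals plus the inequality $g'(c-0)\le g'(c+0)$ really forces convexity across the junction, the point being that a downward jump of the derivative would produce a concave corner, so that inequality must enter essentially, and the strictness has to be tracked through every chord-slope estimate. (An alternative to gluing is to write $f(y)-f(x)=\int_x^y f'$, legitimate because $f$ is Lipschitz on each piece and hence absolutely continuous, and then compare the averages of the a.e. strictly increasing function $f'$ over $[x,z]$ and $[z,y]$; this meets the same strictness issue, now condensed into $\tfrac1{z-x}\int_x^z f'<\sup_{[x,z]}f'\le\inf_{[z,y]}f'\le\tfrac1{y-z}\int_z^y f'$.)
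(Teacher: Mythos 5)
Your proof is correct, but it takes a genuinely different route from the paper's for case (1). You decompose $[a,b]$ at the break points $t_1<\dots<t_n$, invoke the classical smooth result on each closed subinterval, and then glue with a one-junction lemma proved by chord-slope comparisons ($\mathrm{slope}(x,z)<\mathrm{slope}(z,c)\le f'_l(c)\le f'_r(c)<\mathrm{slope}(c,y)$ plus the convex-combination identity for $\mathrm{slope}(z,y)$); this is modular and inductive, and it isolates exactly where the no-downward-jump hypothesis $f'_l(t_i)\le f'_r(t_i)$ enters. The paper instead argues globally and in one computation: from $f(y)-f(x)=\int_x^y f'(t)\,dt$ and monotonicity of $f'$ it extracts the two one-sided tangent-line bounds $f(x)+(y-x)f'_r(x)<f(y)<f(x)+(y-x)f'_l(y)$, applies them at the intermediate point $z=(1-s)x+sy$, and combines them using $f'_l(z)-f'_r(z)\le 0$ to obtain $f(z)<(1-s)f(x)+sf(y)$ directly --- this is essentially the alternative you sketch in your final parenthesis, so you have in effect found both proofs. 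The paper's version is shorter and self-contained (no appeal to the smooth case as a black box, no induction over junctions), while yours makes the role of each exceptional point more transparent. Your treatment of (2) $\Rightarrow$ (1), chaining $f''>0$ on each open subinterval with the one-sided limit inequalities at each $t_i$, coincides with the paper's.
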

\begin{proof}
We will use the following notation for the left and right derivatives of $f'$:
\begin{align*}
f'_l(x) &= \lim\limits_{t\to x-0} f'(t), &
f'_r(x) &= \lim\limits_{t\to x+0} f'(t).
\end{align*}

(2)$\Rightarrow$(1).
The assumption $f''(x)>0$ for all $x \in [a,b] \setminus \{t_1,\ldots,t_n\}$ means that $f'$ strictly increases on each of the segments 
\[ [a,t_1], \ [t_1,t_2], \ \ldots, \ [t_{n-1},t_n], \ [t_n, b]. \]
Moreover, we also have that $f'_l(t_i)\leq f'_r(t_i)$ for all $i=1,\ldots,n$.
Therefore $f'(x)<f'(y)$ for all $x<y \in [a,b] \setminus \{t_1,\ldots,t_n\}$, that is condition (1) holds true.

\medskip

(1) Since $f'$ is piece wise continuous and strictly increases on $[a,b] \setminus \{t_1,\ldots,t_n\}$, it follows that $f'_l(t) \leq f'_r(t)$ for all $t\in(a,b)$ and that both functions $f'_l$ and $f'_r$ strictly increase.

Let $x<y\in[a,b]$ and $t\in(0,1)$. 
Then
\[
f(x) + (y-x) f'_r(x) \ < \  f(y) = f(x) + \int_{x}^{y} f'(t)dt \ < \ f(x) + (y-x) f'_l(y). 
\]
In particular, if $s\in(0,1)$ and $z=(1-s)x + sy \in(x,y)$, then
\begin{align*}
f(z) &< f(x) + (z-x)f'_l(z) \ = \ f(x) + s(y-x)f'_l(z), \\
f(z) &< f(y) - (y-z)f'_r(z) \ = \ f(y) - (1-s)(y-x)f'_r(z).
\end{align*}
Multiplying the first inequality by $1-s$, the second inequality by $s$, adding them and taking to account that $f'_l(z)-f'_r(z)\leq 0$, we get that
\begin{align*}
f(z) &< (1-s)f(x) + sf(y) + s(1-s)(y-x)\bigl(f'_l(z)-f'_r(z)\bigr) \\ &\leq (1-s)f(x) + sf(y).
\end{align*}
This proves strict convexity of $f$.
\end{proof}

In what follows we will assume that $p\colon [-1,1]\to[0,+\infty)$ is a piece wise continuous function such that $\int_{-1}^{1}p(t) dt = 1$ and $\mu$ is the corresponding probability measure on the Borel algebra $\mathcal{B}[-1,1]$ defined by the formula:
\begin{equation}\label{equ:loc_const_measure}
\mu(A) = \int_{A} p(t) dt, \qquad A \in \mathcal{B}[-1,1].
\end{equation}

\begin{lemma}\label{lm:fa_kdiff}
Let $f\colon [a,b]\to\RRR$ be a continuous function and \[ f_{\alpha}\colon [a+\alpha, b-\alpha]\to\RRR \] be its averaging by measure $\mu$.
Then $f_{\alpha}$ belongs to the class $C^1$.

If $f$ is also piece wise $k$-differentiable (resp. belongs to the class $C^k$) for $k\geq1$, then $f_{\alpha}$ is piece wise $(k+1)$-differentiable (resp. belongs to the class $C^{k+1}$)
\end{lemma}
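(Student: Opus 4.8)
The plan is to exploit the convolution representation from Remark~\ref{rem:convolution}. Recall that
\[
f_{\alpha}(x) = \int_{-\alpha}^{\alpha} f(x+s)\, p_{\alpha}(s)\, ds,
\]
where $p_{\alpha}(s) = p(s/\alpha)/\alpha$ is piece wise continuous on $[-\alpha,\alpha]$. Although the convolution is symmetric in the roles of $f$ and $p_{\alpha}$, here $f$ is merely continuous while $p_{\alpha}$ is at least piece wise continuous; so for the basic regularity claim I would put the derivatives on the $p_{\alpha}$-factor, and for the improved regularity I would instead put them on the $f$-factor. Concretely, after the change of variable $u = x+s$ one gets
\[
f_{\alpha}(x) = \int_{x-\alpha}^{x+\alpha} f(u)\, p_{\alpha}(u-x)\, du,
\]
and since $p$ vanishes outside $[-1,1]$ this equals $\int_{-\infty}^{+\infty} f(u)\, p_{\alpha}(u-x)\, du$ with the integrand supported in a fixed compact set as $x$ ranges over a compact interval.

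First I would prove that $f_{\alpha}\in C^1$. Write $P_{\alpha}$ for an antiderivative of $p_{\alpha}$ on $\RRR$ (extended by $p_{\alpha}\equiv 0$ outside $[-\alpha,\alpha]$); since $p_{\alpha}$ is piece wise continuous, $P_{\alpha}$ is continuous and piece wise $C^1$, with $P_{\alpha}' = p_{\alpha}$ off finitely many points. Integrating the representation $f_{\alpha}(x) = \int f(u)\,p_{\alpha}(u-x)\,du$ by parts in $u$ (the boundary terms vanish because $p_{\alpha}$, hence $P_{\alpha}$, is eventually constant and $f$ is bounded on the relevant compact set — more precisely, choosing $P_{\alpha}$ with $P_{\alpha}\equiv 0$ for $u\le -\alpha$ and $P_{\alpha}\equiv 1$ for $u\ge\alpha$, or differencing two such antiderivatives to kill the constant), we obtain an expression of the form $f_{\alpha}(x) = -\int F(u)\, \big(\tfrac{\partial}{\partial x}P_{\alpha}(u-x)\big)\,du$ type; the cleanest route, though, is: substitute $v = u-x$ back and use that $p_{\alpha}(v) = \int$-data to write $f_{\alpha}(x) = \int_{-\alpha}^{\alpha} f(x+v)\,p_{\alpha}(v)\,dv = \int_{-\alpha}^{\alpha} \big(f(x+v) - c\big)\,p_{\alpha}(v)\,dv + c$ for the constant $c = f(x_0+\cdot)$ — this normalization is a distraction. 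The honest argument: from $f_{\alpha}(x) = \int_{\RRR} f(u)\,p_{\alpha}(u-x)\,du$, integrate by parts to get $f_{\alpha}(x) = \int_{\RRR} G(u)\, p_{\alpha}'(u-x)\,du$ is not legitimate since $p_{\alpha}'$ is only a measure; instead integrate by parts the \emph{other} way to move a derivative off $p_{\alpha}$:
\[
f_{\alpha}(x) = \int_{\RRR} f(u)\, p_{\alpha}(u-x)\, du = -\int_{\RRR} f(u)\, \frac{d}{dx}\Big[ R_{\alpha}(u-x)\Big]\, du,
\]
where $R_{\alpha}(v) = \int_{-\infty}^{v} p_{\alpha}$ so that $\frac{d}{dv}R_{\alpha} = p_{\alpha}$ and $\frac{d}{dx}R_{\alpha}(u-x) = -p_{\alpha}(u-x)$. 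Since $R_{\alpha}$ is Lipschitz (as $p_{\alpha}$ is bounded) and equal to a constant outside $[-\alpha,\alpha]$, and $f$ is continuous on the compact set of relevant $u$, one differentiates under the integral sign: $f_{\alpha}'(x) = \int_{\RRR} f(u)\, p_{\alpha}(u-x)\,\text{(as a weak derivative)}$ — this is circular. So the correct and simplest mechanism is to differentiate $f_{\alpha}$ directly after the substitution $f_{\alpha}(x) = \int_{\RRR} f(v+x)\, p_{\alpha}(v)\, dv$ is \emph{not} differentiable term by term either, because $f$ is only continuous. The resolution, which I would present, is to swap the roles: write $f_{\alpha}(x) = \int_{\RRR} f(u)\, p_{\alpha}(u-x)\, du$ and split the integral at the finitely many points where $p_{\alpha}$ is discontinuous; on each subinterval $p_{\alpha}(u-x)$ is a genuinely $C^1$ (indeed continuous, and we only need to handle one derivative via an antiderivative of the continuous piece) function of $x$, integrate by parts within that subinterval to transfer the derivative onto an antiderivative $F(u) = \int_{a}^{u} f$ of $f$ (which is $C^1$ since $f$ is continuous), collect the boundary terms at the subdivision points — they telescope and, using that $p_{\alpha}$ has matching one-sided limits accounted for, assemble into a continuous function of $x$. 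This shows $f_{\alpha}'(x)$ exists and is continuous, i.e.\ $f_{\alpha}\in C^1$.

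Second, for the inductive improvement: if $f$ is piece wise $k$-differentiable (or $C^k$), $k\ge 1$, I would use the representation $f_{\alpha}(x) = \int_{-\alpha}^{\alpha} f(x+s)\, p_{\alpha}(s)\, ds$ and now differentiate under the integral sign in $x$, which is legitimate because $\frac{\partial}{\partial x} f(x+s) = f'(x+s)$ exists and is piece wise continuous (hence locally bounded, hence dominated) — one must only be careful that for fixed $x$ the map $s\mapsto f'(x+s)$ has its finitely many discontinuities at $s = t_j - x$, a set of measure zero, so the integral is unaffected. This yields
\[
f_{\alpha}'(x) = \int_{-\alpha}^{\alpha} f'(x+s)\, p_{\alpha}(s)\, ds = (f')_{\alpha}(x)\cdot\text{(normalization)},
\]
i.e.\ the averaging operator commutes with differentiation on piece wise $C^1$ functions: $f_{\alpha}' = (f')_{\alpha}$ (up to the constant $\int p_\alpha = 1$, so exactly). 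Since $f'$ is piece wise $(k-1)$-differentiable (resp.\ $C^{k-1}$), by the already-proved $C^1$ statement applied to $f'$ we get $(f')_{\alpha} = f_{\alpha}'$ is $C^1$, hence $f_{\alpha}\in C^2$; iterating, $f_{\alpha}^{(j)} = (f^{(j)})_{\alpha}$ for $j\le k$, and the $C^1$ statement applied to $f^{(k)}$ (piece wise $C^0$) gives that $f_{\alpha}^{(k)}$ is $C^1$, so $f_{\alpha}$ is $C^{k+1}$; if $f$ itself is $C^k$ then $f^{(k)}$ is $C^0$ and the same gives $f_{\alpha}\in C^{k+1}$.

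I expect the main obstacle to be the base case $f_{\alpha}\in C^1$ for $f$ merely continuous: one cannot differentiate under the integral sign directly, and the honest argument requires transferring the derivative onto $p_{\alpha}$ via integration by parts on the pieces where $p_{\alpha}$ is continuous, then checking that the boundary contributions at the (finitely many) discontinuity points of $p_{\alpha}$ assemble into a continuous function of $x$ — equivalently, one writes $f_\alpha(x)=\int_{\RRR}F(u)\,d_u\!\big(p_\alpha(u-x)\big)$ as a Riemann–Stieltjes integral against the function $u\mapsto p_\alpha(u-x)$ of bounded variation, where $F$ is the $C^1$ antiderivative of $f$, and observes that this is a continuous (indeed $C^1$) function of $x$ because translation is continuous in total variation away from the finitely many jumps and $F$ is $C^1$. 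Once the base case is in hand, the inductive step is the routine "averaging commutes with $d/dx$ on piece wise $C^1$ functions" computation, justified by dominated differentiation under the integral since $f'$ is locally bounded. I would therefore organize the write-up as: (i) establish $f_\alpha'=(f')_\alpha$ for piece wise $C^1$ $f$ by differentiation under the integral; (ii) establish $f_\alpha\in C^1$ for continuous $f$ by the Stieltjes / integration-by-parts argument; (iii) combine by induction.
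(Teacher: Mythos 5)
Your base case, once the false starts are stripped away, lands on the paper's own mechanism: after the substitution $u=x+t\alpha$ the integral over each interval of continuity of $p$ becomes $\tfrac{1}{\alpha}\int_{x+t_i\alpha}^{x+t_{i+1}\alpha} f(u)\,p\bigl(\tfrac{u-x}{\alpha}\bigr)\,du$, and the fundamental theorem of calculus turns the $x$-derivative into a finite linear combination of translates $f(x+t_j\alpha)$ weighted by one-sided values of $p$ --- which is exactly the paper's Eq.~\eqref{equ:fprime_alpha_formula}. (Both you and the paper are implicitly using that $p$ is locally constant here; for a genuinely piecewise \emph{continuous} density your Stieltjes/integration-by-parts step is not justified, since a piecewise continuous function need not have bounded variation, and the paper's formula is not justified either. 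I won't count that against you.)

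The genuine gap is in your inductive step. You conclude that piecewise $C^k$ input gives $C^{k+1}$ output by ``applying the already-proved $C^1$ statement to $f^{(k)}$''; but that statement requires \emph{continuous} input, and $f^{(k)}$ (already $f'$ when $k=1$) is only piecewise continuous. Averaging a function with jumps yields a Lipschitz, piecewise $C^1$ function, not a $C^1$ one. Concretely, take $p\equiv\tfrac{1}{2}$ on $[-1,1]$ and $f(x)=|x|$, which is piecewise $1$-differentiable: then $f_{\alpha}(x)=\tfrac{x^2+\alpha^2}{2\alpha}$ for $|x|\leq\alpha$ and $f_{\alpha}(x)=|x|$ otherwise, so $f''_{\alpha}$ jumps at $\pm\alpha$ and $f_{\alpha}\notin C^2$. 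The lemma claims only piecewise $(k+1)$-differentiability in this case, and that is what the paper's argument delivers: Eq.~\eqref{equ:fprime_alpha_formula} exhibits $f'_{\alpha}$ as a finite sum of translates of $f$, so $f'_{\alpha}$ inherits exactly the regularity class of $f$ (piecewise $C^k$, resp.\ $C^k$), giving both halves of the statement at once. Relatedly, your identity $f^{(j)}_{\alpha}=(f^{(j)})_{\alpha}$ fails for $j\geq 2$ in the piecewise case, because the jumps of $f^{(j-1)}$ contribute to the derivative of $(f^{(j-1)})_{\alpha}$ but are invisible to $(f^{(j)})_{\alpha}$. Your argument does correctly handle the $C^k\Rightarrow C^{k+1}$ half; to repair the piecewise half, replace the commutation identity by the translate formula (or simply observe that a finite linear combination of translates of a piecewise $C^k$ function is piecewise $C^k$).
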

\begin{proof}
Notice that
\[
f_{\alpha}(x) = \int_{-1}^{1} f(x+t\alpha)p(t)dt = \sum_{i=0}^{n} \int_{t_i}^{t_{i+1}} f(x+t\alpha)p(t)dt.
\]
Since $f$ is continuous, its derivative $f'$ is given by the following formula:
\begin{equation}\label{equ:fprime_alpha_formula}
f'_{\alpha}(x) = \sum_{i=0}^{n} \Bigl( f_l(x+t_{i+1}\alpha) p_l(t_{i+1}) - f_r(x+t_{i}\alpha) p_r(t_{i}) \Bigr),
\end{equation}
and so it is continuous as well.
It also follows that $f'_{\alpha}$ is piece wise $k$-differentiable as well as $f$.
Therefore $f_{\alpha}$ is piece wise $(k+1)$-differentiable.
Moreover,
\begin{equation}\label{equ:s-derivative}
f^{(s)}_{\alpha}(x) = \sum_{i=0}^{n} \Bigl( f_l^{(s-1)}(x+t_{i+1}\alpha) p_l(t_{i+1}) - f_r^{(s-1)}(x+t_{i}\alpha) p_r(t_{i}) \Bigr)
\end{equation}
for all $x$ at which the right hand side is continuous.

If $f$ belongs to the class $C^k$, then, in particular, $f = f_l = f_r$, whence we get from~\eqref{equ:fprime_alpha_formula} that $f_{\alpha}$ belongs to the class $C^{k+1}$.
\end{proof}

\begin{lemma}\label{lm:averaging_stab}
Let $f\colon [-\eps,\eps]\to\RRR$ be a continuous function satisfying the following conditions:
\begin{itemize}
\item[(a)] $f$ strictly increases on $[-\eps,0]$ and strictly decreases of $[0, +\eps]$;
\item[(b)] $f'_{\alpha}$ strictly increases.
\end{itemize}
Then the germ of $f$ at $0$ is topologically stable with respect to the measure $\mu$.
\end{lemma}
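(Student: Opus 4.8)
The plan is to show that for every small $\alpha>0$ the averaging $f_{\alpha}$ has near $0$ the same qualitative shape as $f$ — a single strict local maximum with a strictly increasing branch on its left and a strictly decreasing branch on its right — and then to build the required pair of orientation preserving homeomorphisms by gluing the obvious homeomorphisms of the two monotone branches. \emph{First, the shape of $f_{\alpha}$.} Fix $\alpha\in(0,\eps/2)$. By Lemma~\ref{lm:fa_kdiff}, $f_{\alpha}\colon[-\eps+\alpha,\eps-\alpha]\to\RRR$ is of class $C^{1}$, and by~(b) its derivative $f'_{\alpha}$ is strictly monotone, so $f_{\alpha}$ is strictly convex or strictly concave (Lemma~\ref{lm:f_is_convex}, applied to $f_{\alpha}$ or to $-f_{\alpha}$). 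Since $f$ is strictly increasing on $[-\eps,0]$ and strictly decreasing on $[0,\eps]$, by \cite[Lemma~2.1]{MaksymenkoMarunkevych:UMZ:ENG:2015} the function $f_{\alpha}$ is strictly increasing on $[-\eps+\alpha,-\alpha]$ and strictly decreasing on $[\alpha,\eps-\alpha]$; as both intervals are non-degenerate for $\alpha<\eps/2$, this excludes convexity and forces $f'_{\alpha}$ to be strictly decreasing, with $f'_{\alpha}>0$ at $-\eps+\alpha$ and $f'_{\alpha}<0$ at $\eps-\alpha$. Hence there is a unique $y_{\alpha}\in[-\alpha,\alpha]$ with $f'_{\alpha}(y_{\alpha})=0$, it is the only local extreme of $f_{\alpha}$ on $(-\eps+\alpha,\eps-\alpha)$, it is an interior local maximum, and $f_{\alpha}$ strictly increases on $[-\eps+\alpha,y_{\alpha}]$ and strictly decreases on $[y_{\alpha},\eps-\alpha]$.

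\emph{Second, matching levels.} Since $f$ is continuous with a strict maximum at the interior point $0$, for every $\ell<f(0)$ sufficiently close to $f(0)$ there are unique $c_{1}\in(-\eps/2,0)$ and $c_{2}\in(0,\eps/2)$ with $f(c_{1})=f(c_{2})=\ell$; fix one such $\ell$. Likewise, since $f_{\alpha}$ is continuous with a strict maximum at the interior point $y_{\alpha}\in(-\eps/2,\eps/2)$, there are $\ell'<f_{\alpha}(y_{\alpha})$ and unique $d_{1}<y_{\alpha}<d_{2}$ in $(-\eps/2,\eps/2)$ with $f_{\alpha}(d_{1})=f_{\alpha}(d_{2})=\ell'$. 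Both $f|_{(c_{1},c_{2})}$ and $f_{\alpha}|_{(d_{1},d_{2})}$ are then ``tent maps'' whose two one-sided limits at the endpoints coincide, with strict maximum values $f(0)$ and $f_{\alpha}(y_{\alpha})$.

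\emph{Third, the homeomorphisms.} Pick an orientation preserving homeomorphism $\phi\colon\RRR\to\RRR$ with $\phi(f(0))=f_{\alpha}(y_{\alpha})$ and $\phi(\ell)=\ell'$, which exists because $\ell<f(0)$ and $\ell'<f_{\alpha}(y_{\alpha})$. Then $\phi\circ f|_{(c_{1},0]}$ and $f_{\alpha}|_{(d_{1},y_{\alpha}]}$ are order preserving homeomorphisms onto $(\ell',f_{\alpha}(y_{\alpha})]$, while $\phi\circ f|_{[0,c_{2})}$ and $f_{\alpha}|_{[y_{\alpha},d_{2})}$ are order reversing homeomorphisms onto $(\ell',f_{\alpha}(y_{\alpha})]$. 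Define $h$ on $(c_{1},0]$ by $h=\bigl(f_{\alpha}|_{(d_{1},y_{\alpha}]}\bigr)^{-1}\circ\phi\circ f|_{(c_{1},0]}$ and on $[0,c_{2})$ by $h=\bigl(f_{\alpha}|_{[y_{\alpha},d_{2})}\bigr)^{-1}\circ\phi\circ f|_{[0,c_{2})}$. Both formulas map $0$ to $y_{\alpha}$ and are strictly increasing (a composite of two order preserving maps on the left, of two order reversing maps on the right), so $h\colon(c_{1},c_{2})\to(d_{1},d_{2})$ is a well-defined orientation preserving homeomorphism with $\phi\circ f|_{(c_{1},c_{2})}=f_{\alpha}|_{(d_{1},d_{2})}\circ h$. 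Thus $f|_{(c_{1},c_{2})}$ and $f_{\alpha}|_{(d_{1},d_{2})}$ are topologically equivalent, and since $\alpha\in(0,\eps/2)$ was arbitrary, the germ of $f$ at $0$ is topologically stable with respect to the measure $\mu$ by Definition~\ref{def:loc_top_stab}.

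The part I expect to cost the most care is the second step and its use in the third: a single homeomorphism $\phi$ of $\RRR$ can reconcile the two monotone branches only when the two endpoint levels of the tent agree, so one must cut $f$ and $f_{\alpha}$ at a common value close to the maximum rather than use an arbitrary neighbourhood of the extremum; the first step, isolating the unique interior maximum of $f_{\alpha}$, is comparatively routine once hypothesis~(b) and \cite[Lemma~2.1]{MaksymenkoMarunkevych:UMZ:ENG:2015} are in hand.
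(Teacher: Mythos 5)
Your argument follows the paper's proof in its essential step: use Lemma~\ref{lm:fa_kdiff} to get $f_{\alpha}\in C^1$, Lemma~\ref{lm:f_is_convex} to convert monotonicity of $f'_{\alpha}$ into strict convexity/concavity, and the monotonicity of $f_{\alpha}$ near the endpoints of $[-\eps+\alpha,\eps-\alpha]$ to locate a unique interior local extremum. Where you go beyond the paper is the final step: the paper simply asserts that this implies the germ of $f$ at $0$ is topologically equivalent to the germ of $f_{\alpha}$ at its extremum, whereas you actually produce the pair $(h,\phi)$ by cutting both graphs at matching sublevel values and gluing the branch inverses. That construction is correct, and the point you flag as delicate (one must cut at a common level so that a single $\phi$ reconciles both monotone branches) is exactly the detail the paper leaves unsaid.

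One issue you repaired silently should be made explicit: hypotheses (a) and (b) as printed are incompatible. If $f$ has a strict maximum at $0$, then $f_{\alpha}$ increases on $[-\eps+\alpha,-\alpha]$ and decreases on $[\alpha,\eps-\alpha]$, so for $\alpha<\eps/2$ there are points $u<v$ with $f'_{\alpha}(u)\geq 0\geq f'_{\alpha}(v)$ and $f'_{\alpha}$ cannot strictly increase. The paper's own proof resolves this by reading (a) as ``decreases then increases'' (a minimum at $0$), which matches the convexity delivered by (b) and the way the lemma is invoked in Theorem~\ref{th:perturb_averaging_stab}; you instead kept (a) literally and downgraded (b) to ``strictly monotone'', letting the shape of $f$ force concavity. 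The two repairs yield the same correct statement up to replacing $f$ by $-f$, but as written your first step contradicts hypothesis (b) rather than using it, so you should say which hypothesis you are amending.
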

\begin{proof}
Since $f$ is continuous, we get from Lemma~\ref{lm:fa_kdiff} that the averaging $f_{\alpha}$ is a continuously differentiable function.
By assumption (b) $f'_{\alpha}$ strictly increases, whence by (1) of Lemma~\ref{lm:f_is_convex} we obtain that $f_{\alpha}$ is strictly convex function.
Since $f_{\alpha}$ decreases on a neighborhood of the point $-\eps+\alpha$ and increases in a neighborhood of $\eps-\alpha$, if follows that $f_{\alpha}$ has a unique local minimum $x_{\alpha}$ on the segment $[-\eps+\alpha,\eps-\alpha]$.
This implies that the germ $f$ at $0$ is topologically equivalent to the germ of $f_{\alpha}$ at $x_{\alpha}$.
\end{proof}

\begin{theorem}\label{th:perturb_averaging_stab}
Let $f,g\colon [-\eps,\eps]\to\RRR$ be two piece wise $1$-differentiable functions and $h=f-g$. 
Suppose the the following conditions hold:
\begin{itemize}
\item[(a)] $f$ and $g$ strictly decrease on $[-\eps,0]$ and strictly increase on $[0, +\eps]$;
\item[(b)] there exists $C>0$ such that for all $x\in[-\alpha,\alpha]$ the following inequality holds:
\[f''_{\alpha}(x) \geq C \alpha\,;\]
\item[(c)] the derivative $h' = g' - f'$ is continuous at $0$ and $h'(0)=0$.
\end{itemize}
Then the germ of $g$ at $0$ is topologically stable with respect to averagings by measure $\mu$.
\end{theorem}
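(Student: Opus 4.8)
The strategy is to reduce everything to a single monotonicity statement: that for all small $\alpha$ the derivative $g'_{\alpha}$ is strictly increasing on $[-\alpha,\alpha]$. Once this is known, the argument of Lemma~\ref{lm:averaging_stab} (applied to $g$ in place of $f$, and localised to $[-\alpha,\alpha]$ rather than the whole interval) shows that $g_{\alpha}$ has, on a fixed neighbourhood of $0$, exactly one local extremum — a minimum — whose germ is therefore topologically equivalent to the germ of the strictly-decreasing-then-increasing function $g$ at $0$; letting $\alpha$ vary gives the desired topological stability of the germ in the sense of Definition~\ref{def:loc_top_stab}.

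To set up this reduction, first note that since $f,g$ and $h=f-g$ are piece wise $1$-differentiable, Lemma~\ref{lm:fa_kdiff} makes $f_{\alpha},g_{\alpha},h_{\alpha}$ of class $C^{1}$ and piece wise $2$-differentiable; hence $g'_{\alpha}=f'_{\alpha}-h'_{\alpha}$ is continuous and $g''_{\alpha}=f''_{\alpha}-h''_{\alpha}$ exists off a finite set. Next, by condition~(a) together with \cite[Lemma~2.1]{MaksymenkoMarunkevych:UMZ:ENG:2015}, $g_{\alpha}$ is strictly decreasing on $[-\eps+\alpha,-\alpha]$ and strictly increasing on $[\alpha,\eps-\alpha]$; in particular $g'_{\alpha}(-\alpha)\le0\le g'_{\alpha}(\alpha)$ and every local extremum of $g_{\alpha}|_{(-\eps+\alpha,\eps-\alpha)}$ lies in $[-\alpha,\alpha]$. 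Consequently, if $g''_{\alpha}>0$ throughout $[-\alpha,\alpha]$, then $g'_{\alpha}$ has a unique zero $x_{\alpha}\in[-\alpha,\alpha]$, the function $g_{\alpha}$ strictly decreases on $[-\eps+\alpha,x_{\alpha}]$ and strictly increases on $[x_{\alpha},\eps-\alpha]$, and we are done as above.

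The heart of the matter is thus the inequality $h''_{\alpha}(x)<f''_{\alpha}(x)$ for all $x\in[-\alpha,\alpha]$ and all small $\alpha$. To attack it I would use the formula \eqref{equ:s-derivative} with $s=2$, which expresses $h''_{\alpha}(x)$ as a linear combination of the values $h'(x+\tau\alpha)$, where $\tau$ ranges over $-1$, $1$ and the discontinuity points $t_{1},\dots,t_{n}$ of $p$, the coefficients being, up to sign, the one-sided boundary values $p_{r}(-1),p_{l}(1)$ of $p$ and its jumps $p_{l}(t_{j})-p_{r}(t_{j})$. These coefficients telescope to $0$, so $h''_{\alpha}(x)$ may be rewritten as a combination of the differences $h'(x+\tau\alpha)-h'(x)$. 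Now condition~(c) says $h'$ is continuous at $0$ with $h'(0)=0$, so $M(\alpha):=\sup_{|s|\le2\alpha}|h'(s)|\to0$ as $\alpha\to0$, and for $x\in[-\alpha,\alpha]$ and $t\in[-1,1]$ both $x$ and $x+t\alpha$ lie in $[-2\alpha,2\alpha]$; thus every term $h'(x+\tau\alpha)-h'(x)$ is $O(M(\alpha))$. Weighing this against the lower bound $f''_{\alpha}(x)\ge C\alpha$ from condition~(b) — and using that condition~(a) holds for \emph{both} $f$ and $g$, which should pin down the sign of the unbounded part of $h''_{\alpha}$ and make the one-sided estimate $h''_{\alpha}<f''_{\alpha}$ (rather than a symmetric one) the relevant statement — one obtains $g''_{\alpha}>0$ on $[-\alpha,\alpha]$ for $\alpha$ small. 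When $p$ is genuinely piece wise continuous rather than piece wise constant there is in addition an integral term $\propto\alpha^{-1}\int_{-1}^{1}h'(x+t\alpha)p'(t)\,dt$; subtracting $h'(x)$ first (permissible since $\int_{-1}^{1}p\,dt=1$) controls it by the same $M(\alpha)$-estimate.

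The step I expect to be the genuine obstacle is exactly this last estimate. A crude use of \eqref{equ:s-derivative} bounds $|h''_{\alpha}(x)|$ only by a constant multiple of $\alpha^{-1}M(\alpha)$, which in general is not smaller than $C\alpha$; the real content is to extract the correct power of $\alpha$, i.e.\ to pair whatever blow-up $h''_{\alpha}$ may have against the blow-up that condition~(b) already tolerates in $f''_{\alpha}$, while discarding the wrong-sign contribution by means of the monotonicity in~(a). Granting this, the remaining ingredients — the regularity supplied by Lemma~\ref{lm:fa_kdiff}, the confinement of the extremes of $g_{\alpha}$ to $[-\alpha,\alpha]$, and the identification of the resulting germs — are routine.
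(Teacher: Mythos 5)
Your reduction is exactly the paper's: show that $\varliminf_{y\to x}g''_{\alpha}(y)>0$ for $x\in[-\alpha,\alpha]$ and all small $\alpha$, combine this with the fact that (a) forces $g'_{\alpha}(-\alpha)<0<g'_{\alpha}(\alpha)$ and confines the extremes of $g_{\alpha}$ to $[-\alpha,\alpha]$, conclude that $g_{\alpha}$ has a unique local minimum there, and identify the germs. But you explicitly leave open the one step that carries all the content, the comparison of $h''_{\alpha}$ with $f''_{\alpha}$, and your own diagnosis of the difficulty is accurate: the telescoping device (replacing each $h'(x+\tau\alpha)$ by $h'(x+\tau\alpha)-h'(x)$) extracts from (c) only that these terms are $o(1)$ as $\alpha\to0$, and $o(1)$ is not $O(\alpha)$. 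So as written the proposal is an incomplete proof, missing precisely the quantitative estimate it flags as the obstacle.

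The ingredient the paper uses to close this gap, and which your proposal never extracts from hypothesis (c), is a \emph{linear} bound on $h'$ near the origin rather than mere smallness: writing $h'(y)=y\,k(y)$ with $k$ continuous and $k(0)=0$, one gets for any prescribed bound, say $\tfrac{C}{8Pn}$ with $P=\sup_{t}p(t)$, a $\delta>0$ with $|h'(y)|\le \tfrac{C}{8Pn}|y|$ for $|y|<\delta$. Feeding $y=x-t_i\alpha$, $|y|\le2\alpha<\delta$, into Eq.~\eqref{equ:s-derivative} then gives $|h''_{\alpha}(x)|\le \tfrac{C\alpha}{4Pn}\cdot 2Pn=\tfrac{C\alpha}{2}$, which is exactly the extra power of $\alpha$ you identified as missing, whence $\varliminf_{y\to x} g''_{\alpha}(y)\ge C\alpha-\tfrac{C\alpha}{2}>0$. (Be warned that the paper's execution of this step is itself garbled: it sets $k(x)=\int_0^1h'(tx)\,dt$, which factors $h$, not $h'$, as $x\,k(x)$, yet then applies the inequality $|h'(y)|\le|y|\,|k(y)|$; the linear bound on $h'$ is what is actually used, and it is genuinely stronger than ``$h'$ continuous at $0$ with $h'(0)=0$'' --- consider $h'(y)=\sqrt{|y|}$.) Your remaining speculations do not correspond to anything in the paper: no one-sided estimate exploiting the sign information in (a) is used (the paper proves the symmetric bound $|h''_{\alpha}|\le C\alpha/2$), and no correction term involving $p'$ appears, since Eq.~\eqref{equ:s-derivative} is applied as stated.
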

\begin{proof}
Notice that condition (b) guarantees that $f'_{\alpha}$ strictly increases, whence we obtain from (a) and Lemma~\ref{lm:averaging_stab} that $f$ is topologically stable with respect to averagings by measure $\mu$.
We should prove that under condition (c) the function $g=f+h$ (<<perturbation>> of $f$ with $h$) is also topologically stable with respect to averagings by $\mu$.

Since $g$ is continuous and piece wise $1$-differentiable, we get from Lemma~\ref{lm:fa_kdiff} that $g'_{\alpha}$ continuous, and $g''_{\alpha}$ is piece wise continuous.
Moreover, it follows from (a) that for $\alpha < \eps$ the function $g_{\alpha}$ strictly decreases on $[-\eps+\alpha, -\alpha]$ and strictly increases on $[\alpha, \eps-\alpha]$.
In particular,
\begin{align*}
& g'_{\alpha}(-\alpha)<0,
&
&g'_{\alpha}(\alpha)>0.
\end{align*}
Therefore it suffices to show that $\varliminf\limits_{y\to x} g''_{\alpha}(x)>0$ for $x\in [-\alpha,\alpha]$ and all small $\alpha>0$.
This will imply that $g'_{\alpha}$ strictly increases on $[-\alpha,\alpha]$, whence $g_{\alpha}$ will have a unique local minimum.

Since $h'$ is continuous at $0$ and $h(0)=0$, we have that $h(x) = x k(x)$, where
\[
k(x) = \int_0^1 h'(tx) dt.
\]
In particular, $k$ is continuous and $k(0)= h'(0)=0$.
Let \[ P = \sup\limits_{t\in[-1,1]} p(t)\] and $n$ be the number of discontinuity points of the density $p$ of measure $\mu$, see Eq.~\eqref{equ:loc_const_measure}.
Then there exists $\delta>0$ such that $|k(x)|<\frac{C}{8Pn}$ for all $x\in[-\delta,\delta]$.

Let $\alpha < \delta/2$. 
Then for all $x\in[-\alpha,\alpha]$ and $i=0,\ldots,n+1$ we have the following inequality:
\[
|x-t_i\alpha| < |x| + |t_i| \alpha \leq \alpha + \alpha = 2\alpha < \delta,
\]
whence
\[
|h'(x-t_i\alpha)| = |x-t_i\alpha| \cdot |k(x-t_i\alpha)| \leq 2\alpha \cdot \frac{C}{8Pn} =  \frac{C\alpha}{4Pn}.
\]
Now we get from Lemma~\ref{lm:fa_kdiff} that at each $x\in[-\alpha,\alpha]$, where $h''_{\alpha}$ is continuous, we have the inequality:
\begin{align*}
|h''_{\alpha}(x)| &\leq \sum_{i=0}^{n} \Bigl| h_r'(x-t_{i+1}\alpha) p_r(t_{i+1}) - h_l'(x-t_{i}\alpha) p_l(t_{i}) \Bigr| \leq   \frac{C\alpha}{4Pn} \cdot 2Pn = \frac{C\alpha}{2}.
\end{align*}
Therefore
\[
\varlimsup\limits_{y \to x} |h''_{\alpha}(y)| \leq \frac{C\alpha}{2},
\]
and so
\begin{align*}
\varliminf\limits_{y \to x} g''_{\alpha}(y) &= 
\varliminf\limits_{y \to x} \bigl( f''_{\alpha}(x) + h''_{\alpha}(x) \bigr) 
\geq \varliminf\limits_{y \to x} f''_{\alpha}(x) - 
\varlimsup\limits_{y \to x} |h''_{\alpha}(y)| \geq C\alpha - \frac{C\alpha}{2} = \frac{C\alpha}{2} > 0.
\end{align*}
Thus $g'_{\alpha}$ strictly increases.
Theorem~\ref{th:perturb_averaging_stab} is completed.
\end{proof}

\section{Piece wise constant densities}
Let 
\[-1 = t_0 < t_{1} < \cdots < t_{n} < t_{n+1} = 1\]
be an increasing sequence of numbers, $p_{0},\ldots,p_{n} \in [0,+\infty)$ be some non-negative numbers such that $p_{i}\not=p_{i+1}$ for $i=0,\ldots,n-1$.
Define a piece wise constant function $p\colon [-1,1] \to [0,+\infty)$ by the formula:
\[ p[t_{i},t_{i+1}) = p_i, \qquad i=0,\ldots,n-1\]
\[ p[t_{n},t_{n+1}]=p_n,\]
see Figure~\ref{fig:loc_const_density}.
\begin{figure}[h]
\includegraphics[height=2cm]{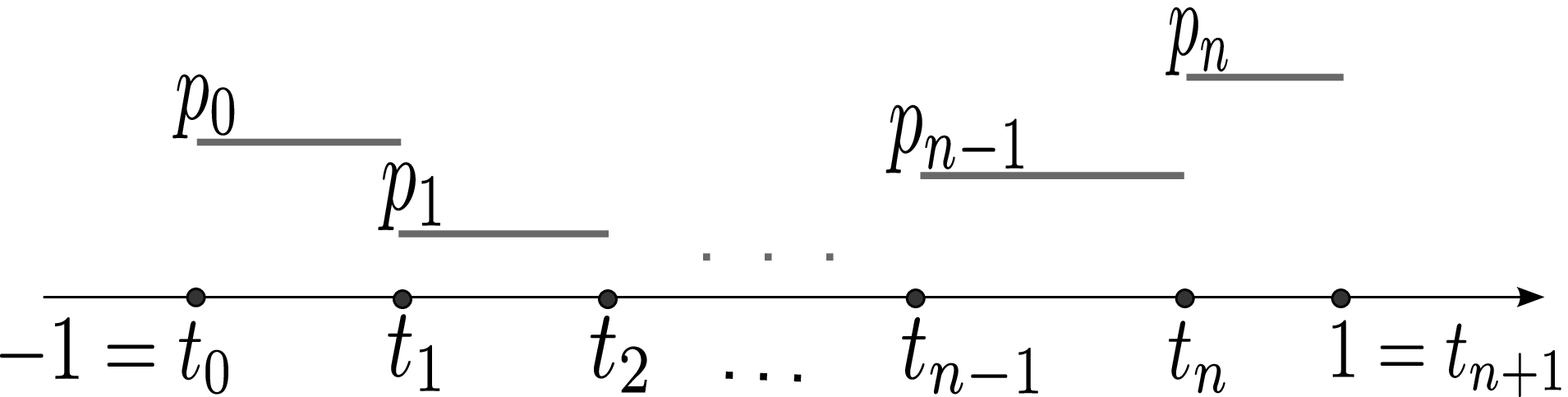}
\protect\caption{}\label{fig:loc_const_density}
\end{figure}

Assume also that
\begin{equation}\label{equ:measure_mu}
\int_{-1}^{1} p(t) dt = \sum_{i=0}^{n} (t_{i+1}-t_{i})p_i = 1.
\end{equation}
Then $p$ is the density function for the probability measure $\mu$ on the Borel algebra of $[-1,1]$ defined by
\[
\mu(A) = \int_{A} p(t)dt, \qquad A \in \mathcal{B}[-1,1].
\]
Hence for each continuous function $f\colon \RRR\to\RRR$ its $\alpha$-averaging $f_{\alpha}\colon \RRR\to\RRR$ by $\mu$ is given by:
\begin{equation}\label{equ:averaging_loc_const_p}
f_{\alpha}(x) = \int_{-1}^{1} f(x+\alpha t)  d\mu = \int_{-1}^{1} f(x+\alpha t) p(t) dt = \sum_{i=0}^{n} p_i \int_{t_{i}}^{t_{i+1}} f(x+\alpha t) dt.
\end{equation}
Notice that then
\begin{equation}\label{equ:averaging_derivative}
f'_{\alpha}(x) = \sum_{i=0}^{n} p_i \int_{t_{i}}^{t_{i+1}} f'(x+\alpha t) dt =
\sum_{i=0}^{n} \bigl( f(x+\alpha t_{i+1}) - f(x+\alpha t_{i}) \bigr) p_i,
\end{equation}
which is a particular case of Eq.~\eqref{equ:fprime_alpha_formula}.

\begin{theorem}\label{th:LR_averaging_stab}
Let $g\colon [-\eps,\eps]\to\RRR$ be a piece wise $1$-differentiable function, satisfying the following conditions:
\begin{itemize}
\item[(a)] $g$ strictly decreases on $[-\eps,0]$ and strictly increases on $[0, +\eps]$;
\item[(b)] there exist finite limits
\begin{align*}
L &= \lim_{x\to 0-0} g'_{x}, & 
R &= \lim_{x\to 0+0} g'_{x}.
\end{align*}
\end{itemize}
For $i=0,\ldots,n+1$ define the following numbers
\begin{align*}
X_i &:= L\mu[t_0,t_i] + R\mu[t_i,t_{n+1}]  = L \sum_{j=0}^{i-1} (t_{j+1}-t_{j})p_j + 
R \sum_{j=i-1}^{n} (t_{j+1}-t_{j})p_j,
\end{align*}
so, in particular,
\[
L \ = \ X_{n+1} \ \leq \ X_{n} \ \leq \ \cdots \ \leq \ X_1 \ \leq \ X_{0} \ = \ R.
\]
Suppose that for each $i\in\{0,\ldots,n\}$ at least on of the numbers $X_{i}$ and $X_{i+1}$ is non-zero.
Then the germ of $g$ at $0$ is topologically stable with respect to the averagings by measure $\mu$.
\end{theorem}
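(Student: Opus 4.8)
The plan is to show that for all sufficiently small $\alpha>0$ the averaging $g_{\alpha}$ has, on a fixed neighbourhood of $0$, exactly one local extremum --- a minimum --- with $g_{\alpha}$ strictly decreasing to its left and strictly increasing to its right; any such restriction is topologically equivalent to $g$ near $0$ by the same elementary argument as in Lemma~\ref{lm:averaging_stab}, which gives the claim. Before the main estimate I would make three reductions. First, we may assume $g(0)=0$ and, shrinking $\eps$, that $0$ is the only point of $[-\eps,\eps]$ where $g$ fails to be $C^{1}$; by (b) this means $g\in C^{1}([-\eps,0])$ and $g\in C^{1}([0,\eps])$ with $g'(0^{-})=L$, $g'(0^{+})=R$, and by (a) we have $L\le 0\le R$. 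Second, since $X_{i}=L\,\mu[t_{0},t_{i}]+R\,\mu[t_{i},t_{n+1}]$ one checks that all the $X_{i}$ vanish exactly when $L=R=0$; hence the hypothesis forces $(L,R)\ne(0,0)$, and therefore $L<R$. Third, exactly as already used above (cf.~\cite[Lemma~2.1]{MaksymenkoMarunkevych:UMZ:ENG:2015}), condition (a) implies that for $\alpha<\eps$ the function $g_{\alpha}$ strictly decreases on $[-\eps+\alpha,-\alpha]$ and strictly increases on $[\alpha,\eps-\alpha]$; so the whole issue is the behaviour of $g_{\alpha}$ on $[-\alpha,\alpha]$. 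Finally, write $g=f+h$, where $f(y):=\max(Ly,Ry)$ is the convex piece-wise linear ``model'' and $h:=g-f$. The corners of $g$ and $f$ at $0$ cancel, so $h\in C^{1}([-\eps,\eps])$ with $h'(0)=0$; consequently $\beta(\alpha):=\sup_{|y|\le 2\alpha}|h'(y)|\to 0$ as $\alpha\to 0^{+}$. By Lemma~\ref{lm:fa_kdiff}, $h_{\alpha}\in C^{2}$, while $f_{\alpha}\in C^{1}$ with $f_{\alpha}''$ piece-wise continuous.

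The heart of the proof is the analysis of the model. For $x\in[-\alpha,\alpha]$ one computes
\[
f'_{\alpha}(x)=\int_{-1}^{1}f'(x+\alpha t)\,d\mu=\Phi(-x/\alpha),
\qquad
\Phi(u):=L\,\mu[-1,u]+R\,\mu[u,1],
\]
and $\Phi$ is continuous, non-increasing, piece-wise linear with $\Phi(t_{i})=X_{i}$, and has slope $(L-R)p_{i}$ on $(t_{i},t_{i+1})$, which is negative precisely when $p_{i}>0$. The key point is that the hypothesis guarantees that $\Phi$ has a \emph{unique} zero $u^{\ast}\in[-1,1]$: if $\Phi$ vanished on a non-degenerate interval, some piece inside it would be flat, i.e. $p_{i}=0$, forcing $X_{i}=X_{i+1}=0$, which is excluded. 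Moreover every piece of the partition $\{t_{j}\}$ adjacent to $u^{\ast}$ has $p_{i}>0$ (otherwise $\Phi$ could not cross $0$ at $u^{\ast}$ the way it does), so on those pieces $|\Phi'|\ge\lambda:=(R-L)q>0$, where $q:=\min\{p_{i}:p_{i}>0\}$. Put $x^{\ast}_{\alpha}:=-\alpha u^{\ast}\in[-\alpha,\alpha]$, let $\rho>0$ be the distance from $u^{\ast}$ to the nearest point of $\{t_{j}\}$ different from $u^{\ast}$ (capped at $\tfrac12$), and $J_{\alpha}:=[x^{\ast}_{\alpha}-\alpha\rho,\,x^{\ast}_{\alpha}+\alpha\rho]\cap[-\alpha,\alpha]$. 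From the shape of $\Phi$ one then reads off: $f'_{\alpha}|_{[-\alpha,\alpha]}$ has the unique zero $x^{\ast}_{\alpha}$ and is negative to its left and positive to its right; $f'_{\alpha}\le-\lambda\rho$ on $[-\alpha,\alpha]$ to the left of $J_{\alpha}$ and $f'_{\alpha}\ge\lambda\rho$ to the right of it; and $f''_{\alpha}\ge\lambda/\alpha$ on $J_{\alpha}$ off its at most one jump point (namely $x^{\ast}_{\alpha}$ itself, present only if $u^{\ast}\in\{t_{j}\}$).

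Finally I would bring in the perturbation. For $x\in[-\alpha,\alpha]$ all the arguments $x+\alpha t$ lie in $[-2\alpha,2\alpha]$, which, exactly as in the proof of Theorem~\ref{th:perturb_averaging_stab}, gives $|h'_{\alpha}(x)|\le\beta(\alpha)$ and $|h''_{\alpha}(x)|\le c\,\beta(\alpha)/\alpha$ for a constant $c$ depending only on $p$. Choose $\alpha_{0}>0$ so small that $\beta(\alpha)<\lambda\rho/2$ and $c\,\beta(\alpha)<\lambda/2$ for all $\alpha<\alpha_{0}$. Then, for such $\alpha$, $g'_{\alpha}=f'_{\alpha}+h'_{\alpha}$ is strictly negative on $[-\alpha,\alpha]$ to the left of $J_{\alpha}$ and strictly positive to the right of it, while on $J_{\alpha}$ it is continuous with $g''_{\alpha}\ge\lambda/(2\alpha)>0$ off the single possible jump point, hence strictly increasing on $J_{\alpha}$; together with $g'_{\alpha}\le 0$ at the left end of $J_{\alpha}$ and $g'_{\alpha}\ge 0$ at its right end --- which follows either from the inequalities just listed or, in the degenerate cases $u^{\ast}=\pm1$ (that is, $L=0$ or $R=0$), from the monotonicity of $g_{\alpha}$ from the reduction step --- this produces a unique $x_{\alpha}\in J_{\alpha}$ with $g'_{\alpha}(x_{\alpha})=0$, $g'_{\alpha}<0$ before $x_{\alpha}$ and $g'_{\alpha}>0$ after it. Gluing with the reduction step, $g_{\alpha}$ is strictly decreasing on $[-\eps+\alpha,x_{\alpha}]$ and strictly increasing on $[x_{\alpha},\eps-\alpha]$; hence for every small $\alpha$ the restriction $g_{\alpha}|_{(-\eps/2+\alpha,\,\eps/2-\alpha)}$ is topologically equivalent to $g|_{(-\eps/2,\,\eps/2)}$, so the germ of $g$ at $0$ is topologically stable with respect to averagings by $\mu$.

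The step I expect to be the main obstacle is the structural analysis of $\Phi$ near its zero, where the hypothesis ``at least one of $X_{i},X_{i+1}$ is non-zero for every $i$'' enters in an essential way: it is precisely what prevents $\Phi$, hence $f'_{\alpha}$, from vanishing on a whole sub-interval of $[-\alpha,\alpha]$ on which the small but otherwise unconstrained perturbation $h'_{\alpha}$ could manufacture additional critical points of $g_{\alpha}$. The only other delicate point is the bookkeeping in the degenerate cases $L=0$ or $R=0$, where the model's critical point sits at the endpoint $\mp\alpha$ of $[-\alpha,\alpha]$ and one must borrow the monotonicity of $g_{\alpha}$ coming from condition (a) to control the sign of $g'_{\alpha}$ there.
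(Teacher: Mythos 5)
Your proof is correct, and while it starts from the same decomposition as the paper --- the same piece-wise linear model $f(y)=\max(Ly,Ry)$ (which, after your reduction $L\le 0\le R$, is exactly the function of Lemma~\ref{lm:Lx_Rx}), the same $C^1$ remainder $h=g-f$ with $h'(0)=0$, and the same estimates $|h'_{\alpha}|\le\beta(\alpha)$, $|h''_{\alpha}|\le c\,\beta(\alpha)/\alpha$ --- it diverges at the key step. The paper routes everything through Theorem~\ref{th:perturb_averaging_stab}: it sets $C=\min_i (X_i-X_{i+1})/(t_{i+1}-t_i)$ and asserts that the hypothesis forces $C>0$, i.e.\ that the model $f_{\alpha}$ is uniformly strictly convex on all of $[-\alpha,\alpha]$. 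You instead analyse $f'_{\alpha}(x)=\Phi(-x/\alpha)$ directly, show that the hypothesis gives $\Phi$ a unique zero $u^{*}$ crossed with slope bounded away from zero, and use convexity of $f_{\alpha}$ only on the small window $J_{\alpha}$ around $-\alpha u^{*}$, controlling the sign of $g'_{\alpha}$ elsewhere. This localization is not merely stylistic: since $X_i-X_{i+1}=(R-L)(t_{i+1}-t_i)p_i$, the paper's constant $C$ vanishes whenever some $p_i=0$, which the hypothesis permits as long as the corresponding $X_i=X_{i+1}$ are non-zero; in that case condition (b) of Theorem~\ref{th:perturb_averaging_stab} fails for the model and the paper's reduction does not apply as stated, whereas your argument still goes through because on such a flat piece $f'_{\alpha}$ is a non-zero constant and no critical point of $g_{\alpha}$ can appear there for small $\alpha$. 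The price is length: you must redo the endgame of Theorem~\ref{th:perturb_averaging_stab} by hand (unique zero of $g'_{\alpha}$ in $J_{\alpha}$, gluing with the monotonicity of $g_{\alpha}$ outside $[-\alpha,\alpha]$, and the boundary cases $L=0$ or $R=0$), all of which you handle correctly.
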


The proof of Theorem~\ref{th:LR_averaging_stab} is based on the following lemma:
\begin{lemma}\label{lm:Lx_Rx}
Let $f\colon \RRR\to\RRR$ be a continuous function defined by
\[
f(x) = 
\begin{cases}
Lx, & x\leq 0 \\
Rx, & x>0.
\end{cases}
\]
Then
\begin{align}\label{equ:derivative_LxRx}
\frac{1}{\alpha} \cdot f'_{\alpha}(x) &= 
\begin{cases}
X_{n+1} = L, & x< -\alpha t_{n+1} = -\alpha, \\
X_{i+1} + \dfrac{x+\alpha t_{i+1}}{t_{i+1}-t_i} (X_{i}-X_{i+1}), &  -\alpha t_{i+1} < x < -\alpha t_{i},  \ 1\leq i \leq n \\
X_{0} = R, & -t_{0}\alpha = \alpha < x, \\
\end{cases}
\medskip
\\
\medskip
\label{equ:second_derivative_LxRx}
f''_{\alpha}(x) &= 
\begin{cases}
0, & x< -\alpha t_{n+1} = -\alpha, \\
\dfrac{X_{i}-X_{i+1}}{t_{i+1}-t_i} \ \alpha,  &  -\alpha t_{i+1} < x < -\alpha t_{i} \\ 
0, & -t_{0}\alpha = \alpha < x.
\end{cases}
\end{align}
\end{lemma}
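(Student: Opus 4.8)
The function $f$ is piecewise linear with its only break-point at the origin, so its derivative is the two-step function $f'(y)=L$ for $y<0$ and $f'(y)=R$ for $y>0$ (the value at $y=0$ is immaterial under the integral). Since $f$ is continuous, Lemma~\ref{lm:fa_kdiff} gives $f_{\alpha}\in C^1$, and differentiating under the integral sign (cf.\ \eqref{equ:fprime_alpha_formula}) yields
\[
f'_{\alpha}(x)=\int_{-1}^{1}f'(x+\alpha t)\,p(t)\,dt
= L\cdot\mu\{\,t:x+\alpha t<0\,\}+R\cdot\mu\{\,t:x+\alpha t>0\,\}.
\]
So the whole computation reduces to locating the cut-point $s:=-x/\alpha$, at which $x+\alpha t$ changes sign, relative to the partition $t_0<\dots<t_{n+1}$.

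First I would dispose of the two extreme ranges. If $x<-\alpha$ then $s>1$, hence $x+\alpha t<0$ for every $t\in[-1,1]$ and $f'_{\alpha}(x)=L\,\mu[-1,1]=L=X_{n+1}$; symmetrically, if $x>\alpha$ then $s<-1$ and $f'_{\alpha}(x)=R=X_{0}$. For $-\alpha<x<\alpha$ one has $s\in(-1,1)$, so $s$ lies in exactly one half-open sub-interval $[t_i,t_{i+1})$; translated through $s=-x/\alpha$ this is precisely the range $-\alpha t_{i+1}<x\le-\alpha t_i$. (Note that the substitution $s=-x/\alpha$ reverses order, so the index $i$ decreases as $x$ increases — this is the one point where the bookkeeping has to be done carefully.) On this range
\[
f'_{\alpha}(x)=L\,\mu[t_0,s]+R\,\mu[s,t_{n+1}],
\]
and, since $p\equiv p_i$ on $[t_i,t_{i+1})$, we may write $\mu[t_0,s]=\mu[t_0,t_i]+(s-t_i)p_i$ and $\mu[s,t_{n+1}]=\mu[t_i,t_{n+1}]-(s-t_i)p_i$. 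Substituting this into the last display, and using the definition of the $X_j$ together with the elementary identity $X_i-X_{i+1}=(R-L)(t_{i+1}-t_i)p_i$, exhibits $f'_{\alpha}$ on $(-\alpha t_{i+1},-\alpha t_i)$ as the explicit affine function of $x$ that equals $X_i$ at $x=-\alpha t_i$ and $X_{i+1}$ at $x=-\alpha t_{i+1}$, i.e.\ the expression recorded in~\eqref{equ:derivative_LxRx}. In particular the pieces match at the partition points, so $f'_{\alpha}$ is continuous, in agreement with Lemma~\ref{lm:fa_kdiff}.

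It then remains to differentiate once more. On each open sub-interval $(-\alpha t_{i+1},-\alpha t_i)$ the formula just obtained is affine in $x$, so $f''_{\alpha}$ there equals the corresponding constant slope, namely the value recorded in~\eqref{equ:second_derivative_LxRx}; outside $[-\alpha,\alpha]$ the function $f'_{\alpha}$ is constant, whence $f''_{\alpha}=0$. As consistency checks: the inequalities $L=X_{n+1}\le\dots\le X_{0}=R$ make $f'_{\alpha}$ weakly increasing; integrating $f''_{\alpha}$ over $[-\alpha,\alpha]$ returns $X_0-X_{n+1}=R-L$; and $f''_{\alpha}$ jumps at each $x=-\alpha t_i$ precisely as dictated by the jump $p_{i-1}\to p_i$ of the density, so $f''_{\alpha}$ is only piecewise continuous, as it must be by Lemma~\ref{lm:fa_kdiff}. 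The whole argument is entirely elementary; I do not expect any genuine obstacle beyond keeping the indices and the half-open endpoints straight under the order-reversing substitution $s=-x/\alpha$ and checking that the several pieces glue continuously at $x=\pm\alpha$ and $x=-\alpha t_i$.
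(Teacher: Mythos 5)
Your argument is correct and is in essence the paper's own: both proofs reduce to an elementary case analysis according to where the kink of $f$ at the origin sits relative to the points $x+\alpha t_0<\cdots<x+\alpha t_{n+1}$. The paper evaluates the sum $\sum_i\bigl(f(x+\alpha t_{i+1})-f(x+\alpha t_i)\bigr)p_i$ from \eqref{equ:averaging_derivative} term by term (cases a), b), c) for each $\Delta_i$), while you integrate $f'$ directly against $p$ and split at the single cut point $s=-x/\alpha$; this is the same computation with tidier bookkeeping, and unlike the paper you also say explicitly how \eqref{equ:second_derivative_LxRx} follows. The one point you should not gloss over: carried out to the letter, your (correct) computation gives $f'_{\alpha}(x)=L$ for $x<-\alpha$ and, on $(-\alpha t_{i+1},-\alpha t_i)$, the affine interpolant $X_{i+1}+\frac{x+\alpha t_{i+1}}{\alpha(t_{i+1}-t_i)}(X_i-X_{i+1})$, whose slope is $\frac{X_i-X_{i+1}}{\alpha(t_{i+1}-t_i)}$. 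This is \emph{not} literally the displayed statement: formulas \eqref{equ:derivative_LxRx}--\eqref{equ:second_derivative_LxRx} carry spurious factors of $\alpha$, inherited from the missing $\tfrac1\alpha$ in the second equality of \eqref{equ:averaging_derivative} and from dropping the $\alpha$ from the denominator $\alpha d_i$ of $s$ in the last line of the paper's own computation. Indeed, as printed the middle piece of \eqref{equ:derivative_LxRx} does not glue continuously to the value $X_i$ at $x=-\alpha t_i$ unless $\alpha=1$, and your own consistency check (integrating $f''_{\alpha}$ over $[-\alpha,\alpha]$ to recover $R-L$) succeeds for your formula but fails for the displayed one by a factor $\alpha^{2}$. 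So you have in fact proved the corrected version of the lemma, and you should say so rather than asserting that your affine function ``is the expression recorded in \eqref{equ:derivative_LxRx}''; likewise the index range in the middle case should be $0\le i\le n$, as your treatment implicitly assumes. This correction does not endanger Theorem~\ref{th:LR_averaging_stab}, since the resulting lower bound $f''_{\alpha}\geq C/\alpha$ is even stronger than the bound $C\alpha$ required in condition (b) of Theorem~\ref{th:perturb_averaging_stab} for small $\alpha$.
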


\medskip

\begin{proof}[Proof of Theorem~\ref{th:LR_averaging_stab}.]
It suffices to check conditions (a)-(c) of Theorem~\ref{th:perturb_averaging_stab} for $f$ from Lemma~\ref{lm:Lx_Rx} and $g$.
Condition (a) holds trivially.

Let 
\[  C = \min\limits_{i=0,\ldots,n} \dfrac{X_{i}-X_{i+1}}{t_{i+1}-t_i}.  \]
Then it follows from Eq.~\eqref{equ:second_derivative_LxRx} and assumption that either $X_{i+1}$ or $X_{i}$ is non-zero for all $i\in\{0,\ldots,n\}$, that $C>0$ and $f''_{\alpha}(x) > C \alpha$ for all $x\in[-\alpha,\alpha]$.
This implies condition (b).

Finally put $h = f - g$.
Then
\begin{align*}
\lim\limits_{x\to 0-0} h'(x) &= \lim\limits_{x\to 0-0} f'(x)  -  \lim\limits_{x\to 0-0} g'(x)  = L-L = 0 \\
\lim\limits_{x\to 0+0} h'(x) &= \lim\limits_{x\to 0+0} f'(x)  -  \lim\limits_{x\to 0+0} g'(x)  = R-R = 0,
\end{align*}
whence $h'$ is continuous at $0$ and $h'(0)=0$.
Thus condition (c) is also satisfied.
Therefore by Theorem~\ref{th:perturb_averaging_stab} the germ of $g$ at $0$ is topologically stable with respect to averagings by $\mu$.
\end{proof}

\begin{proof}[Proof of Lemma~\ref{lm:Lx_Rx}.]
For $n-1\geq i \geq 0$ denote
\[
\Delta_i(x) = \bigl( f(x+\alpha t_{i+1}) - f(x+\alpha t_{i}) \bigr)\,p_i.
\]
Then by~\eqref{equ:averaging_derivative},
\[
f'_{\alpha}(x) = \sum_{i=0}^{n}\Delta_i(x).
\]
Consider three cases.

\medskip

a) If $x + \alpha t_{i} < x + \alpha t_{i+1} < 0$ for some $i=0,\ldots,n$, then
\begin{align*}
\Delta_i(x) &= \bigl( L(x+\alpha t_{i+1}) - L(x+\alpha t_{i}) \bigr)\,p_i =
\alpha L (t_{i+1} - t_{i})\,p_i  = \alpha L \mu[t_{i}, t_{i+1}].
\end{align*}

\medskip

b) Suppose $x + \alpha t_{i} \leq 0 \leq x + \alpha t_{i+1}$ for some $i=0,\ldots,n-1$.
This condition is equivalent to the assumption that $x\in[-\alpha t_{i+1}, -\alpha t_{i}]$.
\begin{figure}[h]
\includegraphics[height=2cm]{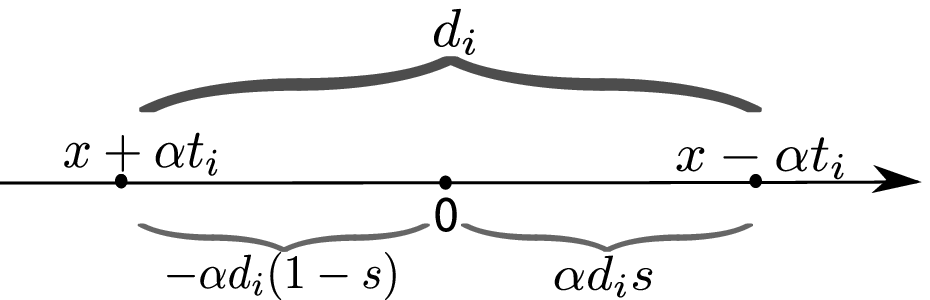}
\protect\caption{}\label{fig:xati_xati1}
\end{figure}

Put
\begin{align*}
d_i &= t_{i+1} - t_i, &
s &= \frac{x+\alpha t_{i+1}}{\alpha d_i},
\end{align*}
see Figure~\ref{fig:xati_xati1}.
Then
\begin{align*}
1-s &= -\frac{x+\alpha t_{i}}{\alpha d_i}, &
x &= -\alpha t_{i+1}(1-s) -\alpha t_i s,
\end{align*}
and so
\begin{align*}
\Delta_i(x) &= \bigl( R(x+\alpha t_{i+1}) - L(x+\alpha t_{i}) \bigr)\, p_i = \bigl( (1-s) L + s R \bigr)\, \alpha \, p_i d_i.
\end{align*}

\medskip

c) If $0 < x + \alpha t_{i} < x + \alpha t_{i+1}$ for some $i=n-1,\ldots,0$, then similarly to the case a) we get that
\begin{align*}
\Delta_i(x) &= \bigl( R(x+\alpha t_{i+1}) - R(x+\alpha t_{i}) \bigr)\, p_i =
\alpha R (t_{i} - t_{i+1}) p_i = \alpha R \mu[t_{i+1}, t_{i}].
\end{align*}

\medskip

Now we can prove Eq.~\eqref{equ:derivative_LxRx} for $f'_{\alpha}$.
Suppose that $x\leq \alpha = \alpha t_0$.
Then $x+\alpha t_{i} < x+\alpha t_{n+1} \leq 0$ for all $i$, whence 
\begin{align*}
f'_{\alpha}(x) &= \sum_{j=0}^{n}\Delta_j(x) = 
\sum_{j=0}^{n} \alpha L \mu[t_{j}, t_{j+1}] =
\alpha L  \sum_{j=0}^{n} \mu[t_{j}, t_{j+1}] = \alpha L \mu[-1,1]=\alpha L.
\end{align*}

If in the case b), $x = -\alpha t_{i+1}(1-s) -\alpha t_i s \in[-\alpha t_{i+1}, -\alpha t_{i}]$ for some $i=0,\ldots,n$, then
\begin{align*}
\frac{1}{\alpha} \cdot f'_{\alpha}(x) &=
\sum_{j=0}^{i-1} L \mu[t_{j}, t_{j+1}] +
\bigl( (1-s)L + sR\bigr)\, \alpha \,\mu[t_i,t_{i+1}] + \sum_{j=i+1}^{n} R \mu[t_{j}, t_{j+1}]  \\[1ex]
&= L \mu[t_{0}, t_{i}] + \bigl( (1-s)L + sR\bigr)\, \alpha \,\mu[t_i,t_{i+1}] + R \mu[t_{i+1}, t_{n+1}] \\[1ex]
&= (1-s) \bigl( L \mu[t_{0}, t_{i+1}] + R \mu[t_{i+1}, t_{n+1}] \bigr)+ s \bigl( L \mu[t_{0}, t_{i}] + R \mu[t_{i}, t_{n+1}] \bigr) \\[1ex]  &= (1-s) X_{i+1} + s X_{i}  =  X_{i+1} + s (X_{i}-X_{i+1}) \\[1ex]
&= X_{i+1} + \frac{x+\alpha t_{i+1}}{t_{i+1}-t_i} (X_{i}-X_{i+1}).
\end{align*}

Finally, if $\alpha = t_{n+1}\alpha \leq x$, то $0 \leq x+\alpha t_{0} <  x+\alpha t_{i}$ for all $i$, and so
\begin{align*}
f'_{\alpha}(x) &= \sum_{j=0}^{n}\Delta_j(x) = 
\sum_{j=0}^{n} \alpha R \mu[t_{j}, t_{j+1}] \\ &=
\alpha R\sum_{j=0}^{n} \mu[t_{j}, t_{j+1}] = \alpha R \mu[-1,1]=\alpha R.
\end{align*}
Lemma is completed.
\end{proof}

\begin{example}\rm
Let us show that if in Theorem~\ref{th:main_theorem_global_2} $X_{i+1}=X_{i}=0$ for some $i$, then the function $g$ can be not tologically stable with respect to $\mu$.
Define the function $f\colon \RRR\to\RRR$ and the density $p\colon [-1,1]\to\RRR$ as follows:
\begin{align*}
f(x) &=
\begin{cases}
-x, & x\leq 0, \\
2x, & x\geq 0
\end{cases}
&
p(x) &= 
\begin{cases}
1, & x\in[-1,-0.5], \\
0,  & x\in(-0.5,0], \\
0.25,  & x\in(0,1),
\end{cases}
\end{align*}
see Figure.~\ref{fig:counterexample_non_stab}.
\begin{figure}[h]
\begin{tabular}{ccc}
\includegraphics[width=0.4\textwidth]{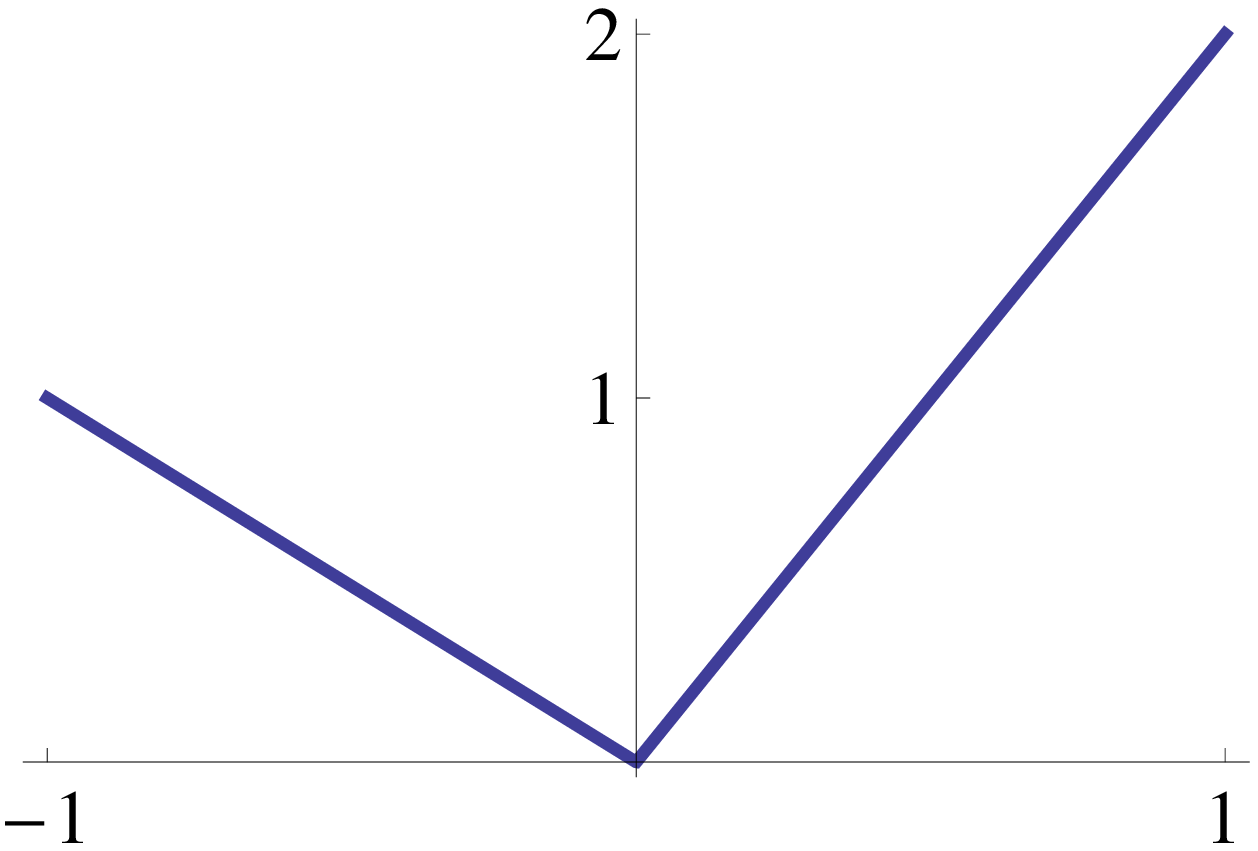} & \qquad\qquad & \includegraphics[width=0.4\textwidth]{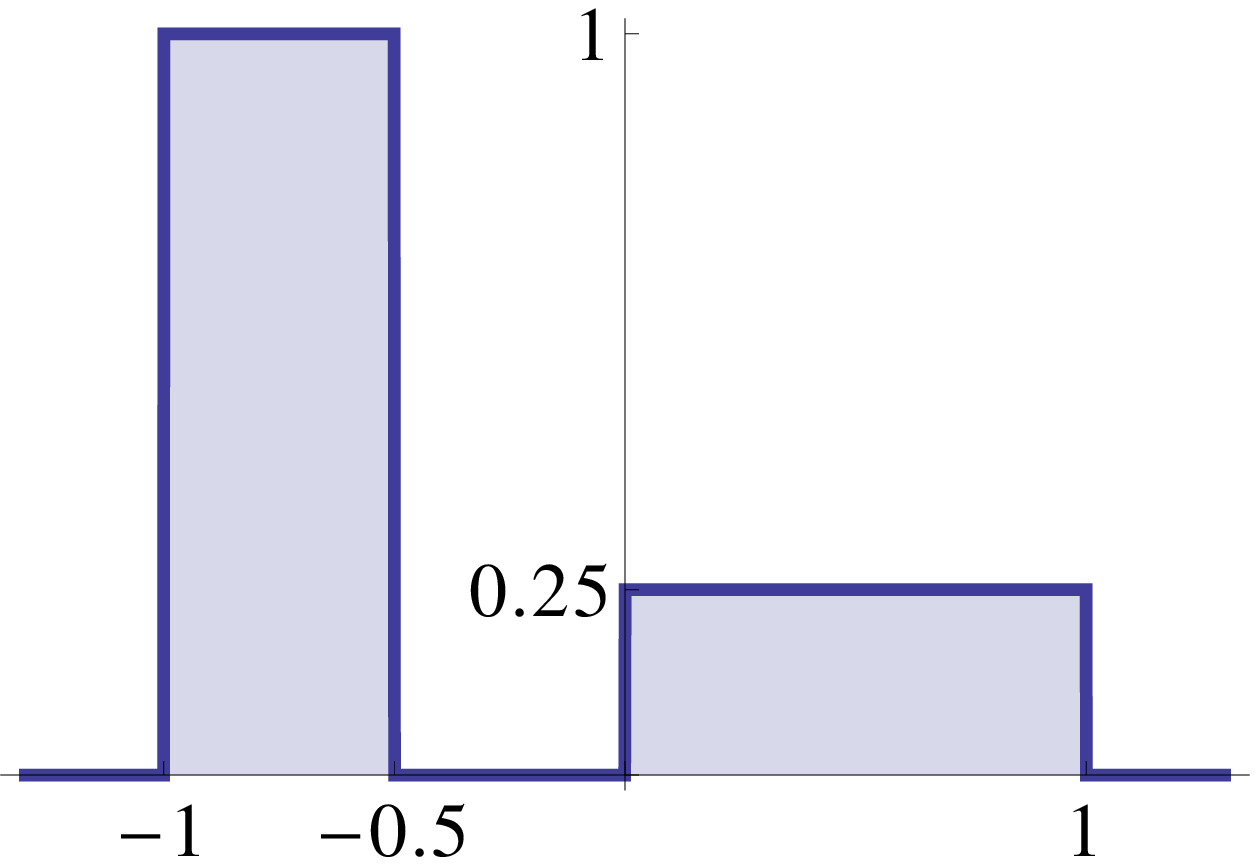} \\
a) $f(x)$ & & (b) $p(x)$
\end{tabular}
\protect\caption{}\label{fig:counterexample_non_stab}
\end{figure}
Thus $L=-1$, $R = 2$, $n=2$, $t_0=-1$, $t_1=-0.5$, $t_2=0$, $t_3=1$, $p_0=1$, $p_1=0$, $p_2=0.25$.
Then
\begin{align*}
X_2 &= L \mu[-1, t_2] + R \mu[t_2, 1] = -1 \cdot 0.5 + 2 \cdot 0.25 = 0, \\
X_1 &= L \mu[-1, t_1] + R \mu[t_1, 1] = -1 \cdot 0.5 + 2 \cdot 0.25 = 0.
\end{align*}
Therefore $X_2=X_1=0$ and the assumptions of Theorem~\ref{th:LR_averaging_stab} fail.
On the other hand, it follows from~\eqref{equ:derivative_LxRx} that for $x\in [-\alpha t_2, -\alpha t_1] = [0, 0.5\alpha]$ we have that
\[
\frac{1}{\alpha} \, f'_{\alpha}(x)= X_{2} + \frac{x+\alpha t_2}{t_3-t_2}(X_1-X_2) = 0.
\]
Hence $f_{\alpha}$ is constant on $[0, 0.5\alpha]$, and so it is not topologically equivalent to $f$, see Figure~\ref{fig:counterexample_non_stab_av}.
\begin{figure}[h]
\includegraphics[width=0.3\textwidth]{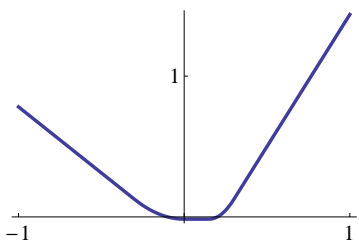}
\protect\caption{Graph of $f_{\alpha}$}\label{fig:counterexample_non_stab_av}
\end{figure}
Thus the assumptions of Theorem~\ref{th:LR_averaging_stab} are essential.
\end{example}

\def\cprime{$'$}
\providecommand{\bysame}{\leavevmode\hbox to3em{\hrulefill}\thinspace}
\providecommand{\MR}{\relax\ifhmode\unskip\space\fi MR }
\providecommand{\MRhref}[2]{%
  \href{http://www.ams.org/mathscinet-getitem?mr=#1}{#2}
}
\providecommand{\href}[2]{#2}

\end{document}